\newtheorem{lemma}{Lemma}[section]
\newtheorem{theorem}[lemma]{Theorem}
\newtheorem{corollary}[lemma]{Corollary}
\newtheorem{proposition}[lemma]{Proposition}
\theoremstyle{definition}
\newtheorem{definition}[lemma]{Definition}
\newtheorem{question}[lemma]{Question}
\newtheorem{problem}[lemma]{Problem}
\newtheorem{example*}[lemma]{Example}
\newtheorem{remark}[lemma]{Remark}
\theoremstyle{remark}
\newtheorem*{rep@theorem}{\rep@title}
\newcommand{\newreptheorem}[2]{%
\newenvironment{rep#1}[1]{%
\def\rep@title{{\bf #2 \ref{##1}}}%
\begin{rep@theorem}}%
{\end{rep@theorem}}}
\DeclareRobustCommand{\qedify}[1]{%
  \ifmmode \quad\hbox{#1}
  \else
    \leavevmode\unskip\penalty9999 \hbox{}\nobreak\hfill
    \quad\hbox{#1}%
  \fi
}
\newenvironment{example}{\begin{example*}\pushQED{\qedify{$\diamondsuit$}}}{\popQED\end{example*}}
\numberwithin{equation}{section}
\newcommand{\Rbar}{\ensuremath{\overline{\mathbb R}}}
\newcommand{\I}{\mathcal{I}}
\newcommand{\B}{\mathcal{B}}
\newcommand{\F}{\mathcal{F}}
\newcommand{\ZZ}{\mathbb{Z}}
\newcommand{\NN}{\mathbb{N}}
\newcommand{\RR}{\mathbb{R}}
\newcommand{\BB}{\mathbb{B}}
\newcommand{\bfx}{\mathbf{x}}
\newcommand{\bfu}{\mathbf{u}}
\newcommand{\bfv}{\mathbf{v}}
\newcommand{\bfw}{\mathbf{w}}
\newcommand{\bfe}{\mathbf{e}}
\newcommand{\rk}{\operatorname{rk}}
\newcommand{\one}{\mathbf{1}}
\newcommand{\sat}{\mathrm{sat}}
\newcommand{\comment}[1]{}
\DeclareMathOperator{\inn}{in}
\DeclareMathOperator{\rank}{rank}
\DeclareMathOperator{\trop}{trop}
\DeclareMathOperator{\spann}{span}
\DeclareMathOperator{\cone}{cone}
\DeclareMathOperator{\starr}{star}
\DeclareMathOperator{\mon}{Mon}
\DeclareMathOperator{\mond}{{\mon_d}}
\DeclareMathOperator{\monld}{{\mon_{\leq d}}}
\DeclareMathOperator{\GL}{GL}
\DeclareMathOperator{\st}{st}
\newcommand{\superimpose}[2]{{\ooalign{$#1\@firstoftwo#2$\cr\hfil$#1\@secondoftwo#2$\hfil\cr}}}
\newcommand{\ttimes}{\hspace{0.3mm}{\mathpalette\superimpose{{\circ}{\cdot}}}\hspace{0.3mm}}
\newcommand{\tplus}{\oplus}
\begin{document}

\title{Tropical ideals do not realise all Bergman fans}

\author{Jan Draisma}
\address{Universit\"at Bern, Mathematisches Institut,
Sidlerstrasse 5,
3012 Bern, Switzerland; and Eindhoven University of Technology}
\email{jan.draisma@math.unibe.ch}
\thanks{JD was partially supported by the Vici grant {\em Stabilisation in
Algebra and Geometry} from the Netherlands Organisation for Scientific
Research (NWO) and project grant 200021\_191981 from the
Swiss National Science Foundation (SNF)
}

\author{Felipe Rinc\'on}
\address{School of Mathematical Sciences, Queen Mary University of
London, Mile End Road, London E1 4NS, United Kingdom}
\email{f.rincon@qmul.ac.uk}
\thanks{FR was partially supported by the Research Council of Norway grant 239968/F20.}

\begin{abstract}
Every tropical ideal in the sense of Maclagan-Rinc\'on has an associated
tropical variety, a finite polyhedral complex equipped with positive
integral weights on its maximal cells. This leads to the realisability
question, ubiquitous in tropical geometry, of which weighted polyhedral
complexes arise in this manner. Using work of Las Vergnas on the
non-existence of tensor products of matroids, we prove that there is no
tropical ideal whose variety is the Bergman fan of the direct sum of the
V\'amos matroid and the uniform matroid of rank two on three elements,
and in which all maximal cones have weight one.
\end{abstract}

\maketitle

\section{Introduction}

An ideal in a polynomial ring over a field with a non-Archimedean
valuation gives rise to a tropical variety, either by taking all weight
vectors whose initial ideals do not contain a monomial or, equivalently
if the field and the value group are large enough \cite{Draisma06a}*{Theorem 4.2}, by applying the coordinate-wise valuation to all points
in the zero set of the ideal.  In the middle of this construction sits a
{\em tropical ideal}, obtained by applying the valuation to all polynomials
in the ideal. This ideal is a purely tropical object, in that
it does not know about the field or the valuation, and 
it contains more information than the tropical variety itself. For these
reasons, tropical ideals, axiomatised in \cite{MR}, were proposed as
the correct algebraic structures on which to build a theory of
tropical schemes. We review the relevant definitions below.

It was proved in \cite{MR} that tropical ideals, while not finitely generated
as ideals---nor in any sense that we know of!---have a rational
Hilbert series, satisfy the ascending chain condition, and define a
tropical variety: a finite weighted polyhedral complex. 
Later in \cite{MR2}, it was shown that the top-dimensional
parts of these varieties are always balanced polyhedral complexes.
This leads to the following realisability question.

\begin{question} \label{que:Realisability}
Which pure-dimensional balanced polyhedral complexes are the variety 
of some tropical ideal?
\end{question}

If the tropical ideal is the tropicalisation of a prime classical
ideal, then the tropical variety is pure-dimensional and balanced
\cite{Maclagan15}*{Theorem 3.3.5}.
The question of which balanced polyhedral
complexes are realised by classical ideals has received much attention,
especially in the case of curves (see, e.g., \cites{Speyer14, Brugalle15,
Birkmeyer17}). But for general tropical ideals, very little is known
about Question~\ref{que:Realisability}: for instance, no natural algebraic
criterion that ensures that the variety is pure-dimensional is known.
In fact,
until recently we had no intuition as to whether tropical ideals are flexible enough that they can realise basically any balanced polyhedral complex, or rather
more rigid, like algebraic varieties. In view of the
following theorem, we now lean towards the latter intuition.

\begin{reptheorem}{thm:main}
Let $M$ and $N$ be loopless matroids of ranks $a$ and $b$ that do not
have a quasi-product of rank $a \cdot b$. Then there exists no tropical
ideal whose tropical variety is the Bergman fan of the direct sum of $M$
and $N$, with all maximal cones having weight $1$.

In particular, there exists no tropical ideal whose tropical variety is the Bergman fan of the direct sum of the V\'amos matroid
$V_8$ and the uniform matroid $U_{2,3}$ of rank two on three elements, with all maximal cones having weight $1$.
\end{reptheorem}

In this theorem, a {\em quasi-product} of two loopless matroids 
is a matroid analogue of tensor products; see
Section~\ref{sec:QuasiProducts}. The fact that the V\'amos matroid
$V_8$ and the uniform matroid $U_{2,3}$ have no quasi-product of rank
$8$ was proved by Las Vergnas in \cite{Vergnas81}.

We believe that this theorem marks the beginning of an interesting
research programme, which, in addition to the pureness and balancing questions
mentioned above, asks which tropical ideals define matroids on the set of variables, and which matroids are, in this sense, tropically algebraic---See Problem \ref{prob:matroidal} and Question \ref{que:algebraic}.

\medskip
\noindent {\bf Acknowledgements.} Both authors would like to thank the Mittag-Leffler Institute for their hospitality during the Spring 2018 program, when this paper was conceived. In addition, FR would like to thank the Discrete Mathematics/Geometry Group at TU Berlin for their support while this paper was written.

\section{Definitions and basic results on tropical ideals} \label{sec:Basics}

Consider the tropical semifield $(\Rbar:=\RR \cup \{\infty\},\tplus,
\ttimes)$ with $\tplus:=\min$ and $\ttimes:=+$. Let $R$ be a sub-semifield
of $\Rbar$. The example most relevant to us is the Boolean semifield $\BB:=\{0,\infty\}$,
which is not only a sub-semifield but also a quotient of $\Rbar$.

\begin{definition}
Let $N$ be a finite set. 
A set $L \subseteq R^N$ is a {\bf tropical linear space} if it is an
$R$-submodule (i.e., $(\infty,\ldots,\infty) \in L$ and $f,g \in L, c
\in R \Rightarrow (c \ttimes f) \oplus g \in L$) and if, moreover, $L$
satisfies the following elimination axiom: for $i \in N$ and $f,g \in L$
with $f_i=g_i \neq \infty$, there exists an $h \in L$ with $h_i=\infty$
and $h_j \geq f_j \tplus g_j$ for all $j \in N$, with equality whenever $f_j
\neq g_j$. The $\Rbar$-submodule $L_{\Rbar}$ of $\Rbar^N$ generated by $L$
is a tropical linear space in $\Rbar^N$, and has the structure of a finite polyhedral complex;
we denote its dimension as such by $\dim L$.
\end{definition}

If $K$ is a field equipped with a 
non-Archimedean valuation onto $R$ and if $V \subseteq K^N$ is a linear
subspace, then the image of $V$ under the coordinate-wise valuation is
a tropical linear space in $R^N$, but not
all tropical linear spaces arise in this manner. Tropical linear spaces
are well-studied objects in tropical geometry and matroid theory: the
definition above is equivalent to that of \cite{Speyer04}, except that
we allow some coordinates to be $\infty$. A tropical linear
space $L$ gives rise to a matroid $M(L)$ in which the independent sets
are those subsets $A \subseteq N$ for which 
$L \cap (R^A \times
\{\infty\}^{N\setminus A})=\{\infty\}^N$, and $L$ is the set of {\bf vectors}
($R$-linear combinations of valuated circuits) of a {\bf valuated matroid}
on $M(L)$ \cite{Murota01}. With this setup, $\dim L=|N|-\rk (M(L))$. We will freely alternate
between these different characterisations of tropical linear spaces.

Set $\NN:=\{0,1,2,\ldots\}$, and let $n \in \NN$. Denote by $R[x_1,
\dots,x_n]$ the semiring of polynomials in the variables $x_1,\dots,x_n$
with coefficients in $R$. We
write $\mond$ and $\monld$ for the set of monomials in $x_1,\ldots,x_n$
of degree equal to $d$ and at most $d$, respectively, and we identify a polynomial
in $R[x_1,\ldots,x_n]$ of degree at most $d$ with its coefficient
vector in $R^{\monld}$.

\begin{definition}\label{d:tropicalidealprojective}
A subset $I \subseteq R[x_1,\dots,x_n]$ is a {\bf tropical ideal}
if $x_i \ttimes I \subseteq I$ for all $i=1,\ldots,n$ and if for each $d
\in \NN$ the set $I_{\leq d} \coloneqq \{ f \in I : \deg(f) \leq d\}$ is
a tropical linear space in $R^{\monld}$. 
\end{definition}

This definition is equivalent to \cite{MR}*{Definition 1.1}. Indeed,
there, in addition to the requirement that $I_{\leq d}$ be a tropical
linear space, it is required that $I$ is an ideal in the semiring
$R[x_1,\ldots,x_n]$. This is equivalent to the statement that
$I$ is closed under tropical
multiplication by each $x_i$ and closed under tropical
addition. However, as tropical linear spaces are already closed under tropical
addition, this does not need to be included as an explicit axiom. 

If $I$ is homogeneous, then the latter condition is equivalent to the
condition that for each $d$ the set $I_d$ of homogeneous polynomials in
$I$ of degree $d$ is a tropical linear space in $R^{\mond}$. There
is a natural notion of tropical ideals in the Laurent polynomial ring
$R[x_1^{\pm 1},\ldots,x_n^{\pm 1}]$ that we will also use, and
if $I$ is a tropical ideal in $R[x_1,\ldots,x_n]$ then the set
$I' := \{f/\bfx^{\bfu} \mid f \in I, \bfu \in \NN^n\}$ is a tropical ideal
in $R[x_1^{\pm 1},\ldots,x_n^{\pm 1}]$. 

Tropical ideals were introduced by Maclagan and Rinc\'on in
\cite{MR} as a framework for developing algebraic foundations for tropical
geometry. Tropical ideals are much better behaved than general ideals
of the polynomial semiring $R[x_1,\dots,x_n]$, as we explain below.

\begin{definition}
For $\bfw \in \RR^n$ and $f=\bigoplus_{\bfu}
c_\bfu \ttimes \bfx^\bfu \in R[x_1,\ldots,x_n]$, define the {\bf initial part}
of $f$ relative to $\bfw$ as
\[ \inn_\bfw(f):=\bigoplus_{\bfu \,:\, c_\bfu+\bfu \cdot \bfw=f(\bfw)
} \bfx^{\bfu} \quad \in \BB[x_1,\ldots,x_n]. \]
For a tropical ideal $I$ define its {\bf initial ideal} relative to $\bfw$ as
\[\inn_\bfw I:=\langle \inn_\bfw f \mid f \in I \rangle_\BB.\] 
\end{definition}

Note that in this paper we only consider weights $\bfw$ in $\RR^n$, not in $\Rbar^n$ as in
\cite{MR}. In other words, we do geometry only inside the tropical torus. 

\begin{definition}
The {\bf Hilbert function} of a tropical ideal $I \subseteq
R[x_1,\ldots,x_n]$ is the map $H_I:\NN \to \NN$ given by $d \mapsto
\binom{n+d}{d}-\dim I_{\leq d}$.
\end{definition}

Note that, as usual in commutative algebra, the Hilbert function
measures the codimension of $I_{\leq d}$ in its ambient space
$R^{\monld}$.  A homogeneous variant of this Hilbert function applies
only to homogeneous ideals and measures the codimension of $I_d$ in
$R^{\mond}$. The Hilbert function of a not necessarily homogeneous
ideal $I$ in $R[x_1,\ldots,x_n]$ 
equals the homogeneous Hilbert function of its homogenisation
in $R[x_0,\ldots,x_n]$.

The following is a special case of \cite{MR}*{Corollary
3.6}.  

\begin{theorem} \label{thm:Inn}
For a homogeneous tropical ideal $I \subseteq R[x_1,\ldots,x_n]$ and
any $\bfw \in \RR^n$, $\inn_\bfw I \subseteq \BB[x_1,\ldots,x_n]$ is a homogeneous tropical ideal,
and $H_{\inn_\bfw I} = H_I$. 
\end{theorem}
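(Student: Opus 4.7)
My plan is to reduce the theorem to a lemma about tropical linear spaces under a natural ``shift-and-minimum-support'' operation, and then to establish that lemma via the valuated matroid perspective.

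Several parts of the conclusion are formal: homogeneity of $\inn_\bfw I$ is immediate because $\inn_\bfw$ preserves degree, and $x_i \ttimes \inn_\bfw I \subseteq \inn_\bfw I$ holds tautologically since $\inn_\bfw I$ is by definition the $\BB$-ideal generated by initial parts. The genuine content is that, for each $d \in \NN$, the set $(\inn_\bfw I)_d \subseteq \BB^{\mond}$ is a tropical linear space of the same dimension as $I_d$; the Hilbert function equality follows.

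Fix $d$ and let $\bfv \in \RR^{\mond}$ be the vector with $v_\bfu := \bfu \cdot \bfw$. Coefficient-wise translation by $\bfv$ is a tropical-linear automorphism of $R^{\mond}$, so $L := \bfv \ttimes I_d$ is again a tropical linear space, of the same dimension as $I_d$. For a polynomial $f \in I_d$ with coefficient vector $c_f$, the initial part $\inn_\bfw f$ equals the minimum-support vector $\overline{\bfv \ttimes c_f} \in \BB^{\mond}$, where for $c \in R^E$ with $c \neq \infty^E$ we put $\bar c_e := 0$ when $c_e = \min_{e'} c_{e'}$, and $\bar c_e := \infty$ otherwise. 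Moreover, $\inn_\bfw(\bfx^\bfu \ttimes f) = \bfx^\bfu \ttimes \inn_\bfw f$ for every monomial $\bfx^\bfu$, so $(\inn_\bfw I)_d$ equals the $\BB$-module generated by the set $\bar L := \{\bar c : c \in L,\ c \neq \infty^E\} \cup \{\infty^E\}$. The theorem thus reduces to the intrinsic lemma: for every tropical linear space $L \subseteq R^E$, the set $\bar L \subseteq \BB^E$ is itself a tropical linear space of dimension $\dim L$.

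To prove the lemma I would identify $L$ with the set of vectors of a valuated matroid $\M$ on $E$, whence $\bar L$ is naturally identified with the set of vectors of the underlying (unvaluated) matroid $\underline{\M}$ inside $\BB^E$. This yields the tropical linear space structure directly, and the rank equality $\rk \underline{\M} = \rk \M$ gives $\dim \bar L = |E| - \rk \underline{\M} = |E| - \rk \M = \dim L$. A self-contained verification proceeds by noting that scaling by elements of $R$ does not alter minimum-supports, so one may extend to $R = \Rbar$: closure of $\bar L$ under $\tplus$ then follows by rescaling $f, g \in L$ to share the common minimum value $0$ and computing $\overline{f \tplus g} = \bar f \tplus \bar g$, and the elimination axiom in $\bar L$ is verified by lifting $\bar f, \bar g \in \bar L$ with $\bar f_i = \bar g_i = 0$ to $f, g \in L$ both attaining their (rescaled) minimum at coordinate $i$, applying the elimination axiom in $L$ to obtain $h \in L$ with $h_i = \infty$, and checking that $\bar h$ satisfies the Boolean elimination requirement. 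The main obstacle is the dimension equality $\rk \underline{\M} = \rk \M$, since the minimum-support map could a priori collapse distinct elements; establishing it amounts to exhibiting, from a basis of the valuated matroid $\M$, a family of vectors of $L$ whose minimum-supports form a basis of $\underline{\M}$ --- a standard but non-trivial fact from valuated matroid theory.
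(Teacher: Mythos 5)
The paper offers no proof of Theorem~\ref{thm:Inn}; it quotes it as a special case of Corollary~3.6 of Maclagan--Rinc\'on, and your outline follows essentially the strategy of that cited argument: reduce to the graded pieces, absorb the weight into the coefficients via the tropical-linear automorphism $c \mapsto \bfv \ttimes c$ of $R^{\mond}$, and show that the minimum-support operation sends a tropical linear space $L \subseteq R^E$ to a tropical linear space $\bar L \subseteq \BB^E$ of the same dimension. The reduction is sound (modulo the routine check, using that $\BB[\bfx]$ has no cancellation and that graded pieces of elements of $I$ lie in $I$, that $(\inn_\bfw I)_d$ is spanned by $\{\inn_\bfw f : f \in I_d\}$), and your direct verification of the $\BB$-module and elimination axioms for $\bar L$ is correct; the only case needing a word in the elimination step is $\bar f = \bar g$, where $h' = \infty^E$ works because no equalities are forced.

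There is, however, a genuine gap at exactly the step you flag, and it is compounded by a false identification. The set $\bar L$ is \emph{not} the set of vectors of the underlying matroid $\underline{\M}$. Take $E=\{1,2\}$ and $L$ the $\Rbar$-span of $(0,1)$: then $\underline{\M}=U_{1,2}$, whose set of vectors in $\BB^2$ is $\{(0,0),(\infty,\infty)\}$, whereas $\bar L=\{(0,\infty),(\infty,\infty)\}$. Consequently the ``rank equality $\rk\underline{\M}=\rk\M$,'' which holds by definition of the underlying matroid, says nothing about $\dim\bar L$; what must be proved is that $\bar L$ is the set of vectors of \emph{some} matroid of rank $\rk\M$. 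The correct statement is that $\bar L$ is the set of vectors of the \emph{initial matroid} of $\M$ (at weight $\bfzero$ after your shift), namely the ordinary matroid whose bases are the bases $B$ of $\underline{\M}$ minimising the basis valuation $p(B)$. One must then show (i) that this collection of minimisers satisfies basis exchange, hence is a matroid of the same rank $\rk\M$, and (ii) that its vectors are precisely the minimum-supports of the elements of $L$. This is the entire content of the theorem --- everything else in your write-up is bookkeeping --- and deferring it as ``a standard but non-trivial fact'' leaves the proof incomplete at its crux; as written, the fact you propose to defer to ($\rk\underline{\M}=\rk\M$, or exhibiting vectors whose min-supports form a basis of $\underline{\M}$) is not even the right statement.
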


Theorem~\ref{thm:Inn} allows one to pass to monomial initial ideals and show that
the Hilbert function $H_I(d)$ of a homogeneous tropical ideal $I$
becomes a polynomial in $d$ for sufficiently large $d$, and also that homogeneous tropical
ideals satisfy the ascending chain condition. Via homogenisation, one
sees that both statements also hold for non-homogeneous tropical ideals
(but, as in the classical setting, the theorem does not apply directly,
since for instance, when $n=1$, $\inn_{(1)}(0 \oplus x_1)=0$ generates an
ideal---the entire semiring---with a smaller Hilbert function than any
tropical ideal containing $0 \oplus x_1$ but not $0$).

Furthermore, Maclagan and Rinc\'on prove that tropical ideals have
tropical varieties that are finite polyhedral complexes \cite{MR}*{Theorem 5.11}.

\begin{theorem}
If $I \subseteq R[x_1,\dots,x_n]$ is a tropical ideal then its
(tropical) {\bf variety}
\[V(I) \coloneqq \{ \bfw \in \RR^n : \inn_\bfw I 
\text{ contains no monomial} \}\]
is the support of a finite polyhedral complex.
\end{theorem}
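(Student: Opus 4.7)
The plan is to express $V(I)$ as a descending intersection of finite polyhedral complexes, one per truncation degree, and then to argue that this intersection stabilises at a finite stage.

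For each $d \in \NN$, the truncation $I_{\leq d} \subseteq R^{\monld}$ is a tropical linear space, corresponding to a valuated matroid on the finite ground set $\monld$. Its associated Bergman complex---the locus of weight vectors $\bfv \in \RR^{\monld}$ for which the initial tropical linear space contains no standard basis vector---is the support of a finite polyhedral complex in $\RR^{\monld}$ by standard (valuated) matroid theory. Pulling back along the linear map $\phi_d : \RR^n \to \RR^{\monld}$, $\bfw \mapsto (\bfu \cdot \bfw)_{\bfu \in \monld}$, produces a finite polyhedral complex $\Sigma_d \subseteq \RR^n$ whose support equals the set of $\bfw$ for which $\inn_\bfw I$ contains no monomial of degree at most $d$. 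Since every monomial has finite degree, this yields
\[
V(I) = \bigcap_{d \in \NN} \Sigma_d,
\]
a nested descending chain of finite polyhedral complexes.

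The main obstacle---the step where one really uses that $I$ is a tropical ideal and not merely a graded collection of tropical linear spaces---is to show that this chain stabilises: there exists $d_0 \in \NN$ with $V(I) = \Sigma_{d_0}$, so that $V(I)$ inherits the finite polyhedral complex structure of $\Sigma_{d_0}$. I would attack this by combining Theorem~\ref{thm:Inn}, which preserves the Hilbert function under taking initial ideals (after homogenising if $I$ is not already homogeneous), with the ascending chain condition for tropical ideals proved in \cite{MR}. The crucial cross-degree interaction comes from the inclusion $x_i \ttimes I \subseteq I$: any monomial in $\inn_\bfw I$ in degree $d$ forces all its $x_i$-multiples into $\inn_\bfw I$ in degree $d+1$. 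Once, by ACC together with Hilbert function invariance, all new monomials in $\inn_\bfw I$ beyond some degree $d_0$ are accounted for by such forced propagation, the complexes $\Sigma_d$ for $d > d_0$ must coincide with $\Sigma_{d_0}$, and the theorem follows.
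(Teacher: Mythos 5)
The paper does not prove this statement itself: it is quoted from \cite{MR}*{Theorem 5.11}, and the paper's only gloss is that the proof there constructs the Gr\"obner complex of (the homogenisation of) $I$ --- a finite polyhedral decomposition of $\RR^n$ on whose relatively open cells $\inn_\bfw I$ is constant --- and identifies $V(I)$ with the support of a subcomplex. Your first two steps faithfully reconstruct the beginning of that argument: each truncation $I_{\leq d}$ is a valuated matroid on $\monld$, the locus of weights for which its initial object contains no ``monomial vector'' is the support of a finite polyhedral complex, and pulling back along $\phi_d$ gives a finite complex $\Sigma_d$ in $\RR^n$ with $V(I)=\bigcap_d \Sigma_d$. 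Up to this point the proposal is sound and is essentially the route taken in \cite{MR}.

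The gap is in the stabilisation step, which you rightly flag as the crux but do not actually carry out. What is needed is a single $d_0$, \emph{independent of} $\bfw$, such that whenever $\inn_\bfw I$ contains a monomial it already contains one of degree at most $d_0$. The tools you invoke do not obviously deliver this uniformity: the ascending chain condition applies to one fixed ascending chain of tropical ideals, so for each individual $\bfw$ it yields a stabilisation degree $d_0(\bfw)$, but there are uncountably many $\bfw$ and nothing in your sketch bounds $\sup_\bfw d_0(\bfw)$. Moreover, the ``forced propagation'' observation points the wrong way: multiples of low-degree monomials are harmless; the danger is a genuinely new monomial of high degree appearing over a point of $\Sigma_{d_0}$ that is not a multiple of anything in lower degree, and Hilbert-function invariance alone does not rule this out (initial ideals need not be monomial ideals). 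The argument in \cite{MR} obtains the uniform bound by exploiting that each truncated Gr\"obner complex refines the previous one and has only finitely many cells, and that on each cell the relevant data is controlled by a monomial ideal, so that a never-stabilising refinement would produce an infinite antichain of monomial ideals, contradicting Maclagan's finiteness theorem. Some such finiteness input --- genuinely more than ACC plus Theorem~\ref{thm:Inn} as you deploy them --- is required, so as written the proof is incomplete at its decisive step.
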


Indeed, if $I$ is homogeneous, they show that the sets of $\bfw$
where $\inn_\bfw I$ is constant form the relatively open polyhedra of
a polyhedral complex with support $\RR^n$ called the {\bf Gr\"obner
complex} of $I$, and that the cells where $\inn_\bfw I$ contains
no monomial form a subcomplex with support $V(I)$. By homogeneity,
all cells then contain in their lineality space the linear span of the all-ones vector $\one$. 
In the case where $I \subseteq R[x_1,\ldots,x_n]$ is not necessarily homogeneous, let $I^h$ be its
homogenisation in $R[x_0,x_1,\ldots,x_n]$. Then $\bfw \mapsto (0,\bfw)$
is a bijection between $V(I)$ and the intersection of $V(I^h)$ with the
zeroeth coordinate hyperplane, and we give $V(I)$ the corresponding
polyhedral complex structure. 

The variety of a tropical ideal comes equipped with positive integral
weights on its maximal polyhedra; this is inspired by \cite{Maclagan15}*{Lemma
3.4.7} and studied more in depth in \cite{MR2}.

\begin{definition}\label{def:multiplicity}
Let $I \subseteq R[x_1,\dots,x_n]$ be a tropical ideal, let
$\sigma$ be a maximal polyhedron of $V(I)$, and
let $\bfw$ be in the relative interior of $\sigma$. The {\bf
multiplicity} of $\sigma$ in $V(I)$ is defined as follows.  First, let $I' \subseteq
R[x_1^{\pm 1},\ldots,x_n^{\pm 1}]$ be the (tropical) ideal in the
Laurent polynomial ring generated by $I$. After an automorphism
of the Laurent polynomial ring given by $\bfx^\bfu \mapsto \bfx^{A\bfu}$
with $A \in \GL_n(\ZZ)$, we can assume that the affine span of $\sigma$
is a translate of $\spann(\bfe_1,\dots,\bfe_d)$ for some $d$. In this
case, by \cite{MR2}*{Lemma 6.2}, the tropical ideal $J \coloneqq \inn_{\bfw}(I')
\cap \BB[x_{d+1}, \dots, x_{n}]$ is zero-dimensional, i.e., $H_J(e)$
is a constant for $e \gg 0$. The multiplicity of $\sigma$ is defined to be 
equal to this constant, called the degree of $J$.
\end{definition}

\begin{remark} \label{re:Mult}
A more coordinate-free version of Definition \ref{def:multiplicity}
is the following. Consider the linear span of $\sigma$, defined as 
\[ \spann(\sigma) := \RR_{\geq 0} \{ \bfv - \bfv' \mid \bfv, \bfv' \in \sigma \}. \] 
Let $S \subseteq \BB[x_1^{\pm 1},\ldots,x_n^{\pm 1}]$ be
the sub-semiring spanned by monomials $\bfx^\bfu$ of $\bfw$-weight $\bfw \cdot \bfu$ 
equal to zero for all $\bfw \in \spann(\sigma)$. 
Then $S$ itself is isomorphic to a Laurent polynomial semiring in $n-d$ variables. The
multiplicity of $\sigma$ is the degree of the zero-dimensional tropical ideal
$\inn_\bfw (I') \cap S$.
\end{remark}

We will need the following results.

\begin{lemma} \label{lm:Saturation}
Let $I$ be a tropical ideal in $R[x_1,\ldots,x_n]$. Denote by $I'$
the ideal generated by $I$ in $R[x_1^{\pm 1},\ldots,x_n^{\pm n}]$,
and set $I^\sat:=I' \cap R[x_1,\ldots,x_n]$. Then $I^\sat \supseteq I$
is a tropical ideal, and $V(I^\sat)=
V(I)$ as weighted polyhedral complexes.
\end{lemma}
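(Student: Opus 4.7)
The plan is to introduce the tropical colon ideals $(I:\bfx^{\bfu}) := \{f \in R[\bfx] : \bfx^{\bfu} \ttimes f \in I\}$ for $\bfu \in \NN^n$, show that each is itself a tropical ideal, and recognise $I^{\sat}$ as the eventual stable member of the ascending chain $(I:\bfx^{\one}) \subseteq (I:\bfx^{2\one}) \subseteq \cdots$. For the equality of weighted varieties the key observation is that $I$ and $I^{\sat}$ generate the same Laurent ideal $I'$, and both the support of $V(\cdot)$ and the multiplicities on its maximal cells depend only on $I'$.

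For fixed $\bfu \in \NN^n$ and $d \in \NN$, multiplication by $\bfx^{\bfu}$ identifies $R^{\mon_{\leq d}}$ with the coordinate sub-semimodule $R^A$ of $R^{\mon_{\leq d+|\bfu|}}$, where $A \subseteq \mon_{\leq d+|\bfu|}$ consists of the monomials divisible by $\bfx^{\bfu}$. Under this identification, $(I:\bfx^{\bfu})_{\leq d}$ corresponds to $\{f \in I_{\leq d+|\bfu|} : f_j = \infty \text{ for all } j \notin A\}$, and this is a tropical linear space: the module axioms are immediate, and if $f, g$ in the set satisfy $f_i = g_i \neq \infty$ for some $i \in A$, then any eliminator $h$ in $I_{\leq d+|\bfu|}$ satisfies $h_j \geq f_j \tplus g_j = \infty$ for every $j \notin A$, so $h$ also lies in the set. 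Hence $(I:\bfx^{\bfu})$ is a tropical ideal.

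Since any $\bfu \in \NN^n$ is dominated componentwise by $k\one$ for $k$ large, we have $I^{\sat} = \bigcup_k (I:\bfx^{k\one})$, an ascending union of tropical ideals in $R[\bfx]$. The ascending chain condition for tropical ideals, which follows from Theorem~\ref{thm:Inn} by passage to monomial initial ideals (as discussed after that theorem), forces the chain to stabilise, so $I^{\sat} = (I:\bfx^{k\one})$ for $k$ sufficiently large and is therefore itself a tropical ideal.

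For the equality of weighted varieties, the inclusions $I \subseteq I^{\sat} \subseteq I'$ give $(I^{\sat})' = I'$. A weight $\bfw \in \RR^n$ lies outside $V(I)$ exactly when $\inn_{\bfw}(I)$ contains some monomial $\bfx^{\bfv}$, equivalently when the Laurent extension $\inn_{\bfw}(I') = \inn_{\bfw}(I)\cdot\BB[x_1^{\pm 1},\ldots,x_n^{\pm 1}]$ is the whole Laurent semiring; the same equivalence holds for $I^{\sat}$, and so $V(I) = V(I^{\sat})$ as sets. Since the multiplicities of Definition~\ref{def:multiplicity} are defined directly from $\inn_{\bfw}(I')$, they too depend only on $I'$ and hence coincide on $V(I)=V(I^{\sat})$. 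The main obstacle is the matroid-deletion step in the second paragraph, where one must confirm that the eliminator provided by the ambient tropical linear space $I_{\leq d+|\bfu|}$ stays supported on the monomials divisible by $\bfx^{\bfu}$; everything else is essentially formal manipulation of colon ideals, Laurent extensions, and initial ideals.
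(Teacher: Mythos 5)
Your proof is correct and follows essentially the same route as the paper's, which dismisses the first assertion as ``straightforward from the definition'' and likewise reduces the equality of supports and multiplicities to the common Laurent ideal $I'$. Your colon-ideal computation supplies the omitted details correctly --- in particular the key point that an eliminator in $I_{\leq d+|\bfu|}$ stays supported on multiples of $\bfx^\bfu$ --- though invoking the ascending chain condition is heavier than necessary: for each fixed degree $d$ one can take a common denominator $\bfx^{k\one}$ for the two polynomials at hand and verify the elimination axiom for $(I^\sat)_{\leq d}$ directly.
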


We call $I^\sat$ the {\bf saturation} of $I$ with respect to $m:=x_1
\cdots x_n$, and we call $I$ {\bf saturated} with respect to $m$
if $I^\sat=I$.

\begin{proof}
That $I^\sat$ is a tropical ideal containing $I$ is straightforward from
the definition. Since $I^\sat \supseteq I$ we have $V(I^\sat) \subseteq
V(I)$.  Conversely, let $\bfw \in V(I)$ and $f \in I^\sat$. Then $\bfx^\bfu
\ttimes f \in I$ for some $\bfu \in \NN^n$, hence $\inn_\bfw (\bfx^\bfu \ttimes f)$
is not a monomial, and therefore neither is $\inn_\bfw f$.  This shows that $V(I) =
V(I^\sat)$. That the multiplicities are the same follows from the fact
that the multiplicities in $V(I)$ are defined using $I'$.
\end{proof}

If $\Sigma$ is a polyhedral complex in $\RR^n$ and $\sigma$ is a polyhedron in $\Sigma$,
the {\bf star} $\starr_\sigma \Sigma$ of $\Sigma$ at $\sigma$ is a weighted polyhedral fan, whose cones are
indexed by the cones $\tau$ of $\Sigma$ containing $\sigma$. The cone indexed by such $\tau$ is
\[\overline \tau := \RR_{\geq 0} \{ \bfv - \bfw \mid \bfv \in \tau \text{ and } \bfw \in \sigma \},\]
with weight equal to the weight of $\tau$ in $\Sigma$. 

The following can be found in \cite{MR2}*{Corollary 2.11 and Proposition 6.4}. 

\begin{proposition} \label{prop:Initial}
Let $I$ be a tropical ideal in $R[x_1,\ldots,x_n]$, $\sigma$ be a
polyhedron in $V(I)$, and $\bfw$ be in the relative
interior of $\sigma$. Then $\inn_\bfw I \subseteq \BB[x_1,\ldots,x_n]$
is homogeneous with respect to every vector $\bfv \in \spann(\sigma)$, and 
$V(\inn_\bfw I)=\starr_\bfw V(I)$
as weighted polyhedral complexes.
\end{proposition}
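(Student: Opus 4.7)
The plan is to reduce both claims to a single commutation principle: for any tropical ideal $I$ and any $\bfw, \bfv \in \RR^n$,
\begin{equation}\label{eq:starinit}
\inn_\bfv(\inn_\bfw I) = \inn_{\bfw + \epsilon \bfv}(I)
\end{equation}
for all sufficiently small $\epsilon > 0$. Once \eqref{eq:starinit} is established, both parts of the proposition fall out by tracking what happens as $\bfw$ is perturbed inside $\sigma$.

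To prove \eqref{eq:starinit}, first fix any $f = \bigoplus_\bfu c_\bfu \ttimes \bfx^\bfu \in I$. The monomials of $\inn_{\bfw + \epsilon \bfv}(f)$ are those $\bfx^\bfu$ that minimise $c_\bfu + \bfu \cdot \bfw + \epsilon\, \bfu \cdot \bfv$; for $\epsilon > 0$ sufficiently small (depending on $f$) these are precisely the monomials of $\inn_\bfw f$ with minimum $\bfu \cdot \bfv$, i.e., those of $\inn_\bfv(\inn_\bfw f)$. Since $\inn_{\bfw + \epsilon \bfv} I$ is locally constant as $\epsilon \to 0^+$, this shows the inclusion $\inn_{\bfw + \epsilon \bfv}(I) \subseteq \inn_\bfv(\inn_\bfw I)$. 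The reverse inclusion is the main obstacle: one invokes Theorem \ref{thm:Inn} (after homogenising $I$ if necessary), so that both ideals share the Hilbert function of $I$, and a containment of tropical ideals with identical Hilbert functions must be an equality in every degree, as the degree-$d$ pieces are tropical linear spaces of prescribed dimension.

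For part (1), take $\bfv \in \spann(\sigma)$; since $\bfw$ lies in $\relint \sigma$, the point $\bfw + \epsilon \bfv$ stays in $\relint \sigma$ for all small $\epsilon > 0$. As $\sigma$ is contained in a cell of the Gröbner complex of $I$, on whose relative interior $\inn_{\cdot} I$ is constant, we have $\inn_{\bfw + \epsilon \bfv} I = \inn_\bfw I$. Combining this with \eqref{eq:starinit} yields $\inn_\bfv(\inn_\bfw I) = \inn_\bfw I$. Since the left-hand side is tropically generated by the $\bfv$-homogeneous polynomials $\inn_\bfv g$ for $g \in \inn_\bfw I$, the ideal $\inn_\bfw I$ admits a $\bfv$-homogeneous generating set and hence is itself $\bfv$-homogeneous.

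For part (2), the equality of supports unwinds directly from \eqref{eq:starinit}: $\bfv \in V(\inn_\bfw I)$ iff $\inn_\bfv(\inn_\bfw I)$ contains no monomial, iff $\inn_{\bfw + \epsilon \bfv}(I)$ contains no monomial, iff $\bfw + \epsilon \bfv \in V(I)$ for small $\epsilon > 0$, iff $\bfv \in \starr_\bfw V(I)$. For the weights, let $\overline{\tau}$ be a maximal cone of $\starr_\bfw V(I)$ arising from the maximal cell $\tau \supseteq \sigma$ of $V(I)$, and pick $\bfv \in \relint \overline{\tau}$, so that $\bfw + \epsilon \bfv \in \relint \tau$ for small $\epsilon > 0$. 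Because $\spann(\tau) = \spann(\overline{\tau})$, the sub-semiring $S \subseteq \BB[x_1^{\pm 1}, \ldots, x_n^{\pm 1}]$ appearing in Remark \ref{re:Mult} is the same in both situations. Applying \eqref{eq:starinit} to the Laurent extension $I'$ (and noting that passage to $I'$ commutes with taking initial ideals) gives
\[
\inn_\bfv\bigl((\inn_\bfw I)'\bigr) \cap S \;=\; \inn_{\bfw + \epsilon \bfv}(I') \cap S,
\]
so the multiplicities of $\overline{\tau}$ in $V(\inn_\bfw I)$ and of $\tau$ in $V(I)$ are both computed as the degree of the same zero-dimensional tropical ideal, and therefore coincide.
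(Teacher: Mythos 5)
The paper offers no proof of this proposition: it is imported wholesale from the unpublished reference \cite{MR2} (``The following is a result from \cite{MR2}''), so there is no in-paper argument to measure yours against. Your route --- reducing everything to the commutation rule $\inn_\bfv(\inn_\bfw I)=\inn_{\bfw+\epsilon\bfv}(I)$ for small $\epsilon>0$, and then reading off homogeneity, supports, and multiplicities --- is the standard and expected one, and the overall architecture (one containment obtained by a per-polynomial computation, the other forced by equality of Hilbert functions via Theorem~\ref{thm:Inn}) is sound.

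Two spots need tightening. First, your ``easy'' containment is stated in the wrong direction, or at least with a uniformity gap: the computation $\inn_{\bfw+\epsilon\bfv}(f)=\inn_\bfv(\inn_\bfw f)$ holds only for $\epsilon<\epsilon_f$ with $\epsilon_f$ depending on $f$, and local constancy of $\epsilon\mapsto\inn_{\bfw+\epsilon\bfv}I$ does not control a generator $\inn_{\bfw+\epsilon\bfv}(f)$ when $\epsilon\geq\epsilon_f$. What the same computation cleanly gives is the \emph{opposite} containment $\inn_\bfv(\inn_\bfw I)\subseteq\inn_{\bfw+\epsilon\bfv}(I)$, using the fact from \cite{MR} that every element of $(\inn_\bfw I)_{\leq d}$ is of the form $\inn_\bfw f$ with $f\in I_{\leq d}$. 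Since either containment, combined with the Hilbert-function equality and the fact that nested tropical linear spaces of equal dimension coincide, finishes the proof, this is repairable rather than fatal --- but as written the logic is off. Second, the multiplicity comparison silently uses that the multiplicity of $\tau$ may be computed at the particular point $\bfw+\epsilon\bfv$ of $\relint\tau$ (true, because $\tau$ is a single Gr\"obner cell, so the relevant initial ideal is independent of the chosen interior point) and that forming the Laurent extension commutes with taking initial ideals, $(\inn_\bfw I)'=\inn_\bfw(I')$; both facts are correct but should be stated and justified, since the entire weight claim rests on them.
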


\section{The independence complex of a tropical ideal}

\begin{definition}
Let $I \subseteq R[x_1,\dots,x_n]$ be a tropical ideal. The {\bf
independence complex} of $I$ is the simplicial complex
\begin{equation}\label{eq:indepalgebraic}
\I(I) \coloneqq \{ A \subseteq \{1,\dots,n\} : I \cap R[x_i : i \in  A] = \{\infty \} \}.
\end{equation}
When $\I(I)$ is the collection of independent sets of a matroid $M$,
we will say that $I$ is a {\bf matroidal tropical ideal}, and that $M$
is its associated {\bf algebraic matroid}.
\end{definition}

The independence complex of a tropical ideal $I$ can be recovered from
its variety $V(I)$, at least if $R=\Rbar$.

\begin{proposition} \label{pro:Independence}
	If $I \subseteq \Rbar[x_1,\dots,x_n]$ is a tropical ideal then
	\begin{equation}\label{eq:indepgeometric}
	\I(I) = \{ A \subseteq \{1,\dots,n\} : \pi_A(V(I)) = \RR^A \},
	\end{equation}
	where $\pi_A \colon \RR^n \to \RR^A$ is the coordinate projection onto the coordinates indexed by $A$. In particular, the independence complex $\I(I)$ depends only on the variety $V(I)$.
\end{proposition}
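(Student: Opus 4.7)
I plan to prove the two inclusions of (\ref{eq:indepgeometric}) separately, guided by the principle that $\pi_A(V(I))$ ought to equal $V(I \cap \Rbar[\bfx_A])$, the variety of the elimination tropical ideal; since $A \in \I(I)$ precisely says $I \cap \Rbar[\bfx_A] = \{\infty\}$, whose variety is all of $\RR^A$, this equivalence will yield the proposition.

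For the direction $\supseteq$, I suppose $A \notin \I(I)$ and pick $f \in I \cap \Rbar[\bfx_A]$ with $f \neq \infty$. When $f$ has at least two monomial terms, its tropical hypersurface in $\RR^A$ (the locus where $\inn_{\bfw_A}(f)$ is not a monomial) is a proper polyhedral subset of $\RR^A$; because $\inn_\bfw(f) = \inn_{\pi_A(\bfw)}(f)$, every $\bfw \in V(I)$ projects into this hypersurface, so $\pi_A(V(I)) \neq \RR^A$. When instead $f = c \ttimes \bfx^\bfu$ is a single term, the $\ttimes$-invertibility of $c \in \Rbar$---the one place where the hypothesis $R = \Rbar$ enters---lets me rescale to $\bfx^\bfu \in I$, which forces $V(I) = \emptyset$.

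For the direction $\subseteq$, I suppose $A \in \I(I)$, fix $\bfw_A \in \RR^A$, and seek $\bfw_B \in \RR^B$ with $(\bfw_A, \bfw_B) \in V(I)$. The plan is to set $\bfw_B := t\bfe_B$ for a generic $\bfe_B \in \RR^B$ and $t \gg 0$. A direct computation of initial forms shows that for $t$ large and any $f \in I \setminus \{\infty\}$,
\[
\inn_{(\bfw_A,\, t\bfe_B)}(f) \;=\; \bfx_B^{\bfu_B^*(f)} \ttimes \inn_{\bfw_A}(g_f),
\]
where $\bfu_B^*(f)$ is the unique $B$-exponent in $\supp(f)$ minimising $\bfe_B \cdot \bfu_B$ (unique by genericity of $\bfe_B$), and $g_f \in \Rbar[\bfx_A]$ is obtained by collecting the terms of $f$ with $B$-exponent equal to $\bfu_B^*(f)$ and dropping the common factor $\bfx_B^{\bfu_B^*(f)}$. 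Consequently $(\bfw_A, t\bfe_B) \in V(I)$ exactly when no $f \in I$ has $\inn_{\bfw_A}(g_f)$ equal to a monomial.

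The main obstacle, which I expect to be the crux of the proof, is to show that the hypothesis $A \in \I(I)$ rules out every offending $f$. My plan is to argue by contradiction: if some $f \in I$ produced $\inn_{\bfw_A}(g_f)$ equal to a monomial, then after multiplying the leading $\bfe_B$-part of $f$ by $\bfx_B^{-\bfu_B^*(f)}$ in the Laurent polynomial semiring and invoking the saturation construction of Lemma~\ref{lm:Saturation}, together with the fact that the degree pieces $(I \cap \Rbar[\bfx_A])_{\leq d}$ are themselves tropical linear spaces (inherited from $I_{\leq d}$ by a valuated-matroid contraction), one would extract a genuine element of $I \cap \Rbar[\bfx_A]$ with monomial $\bfw_A$-initial form, contradicting $A \in \I(I)$. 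Formalising this extraction---in particular ensuring that the ``leading $\bfx_A$-polynomial'' operation really lands inside the elimination ideal rather than in some larger auxiliary object---is the substantive technical step where I expect the bulk of the work to lie.
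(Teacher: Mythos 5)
Your argument for the inclusion $\supseteq$ (showing $A \notin \I(I)$ forces $\pi_A(V(I)) \subsetneq \RR^A$) is correct and is essentially the paper's argument. The gap is in the hard direction $\subseteq$, and it is not merely the technical step you flagged: the strategy itself fails. The point $(\bfw_A, t\bfe_B)$ with $\bfe_B$ generic and $t \gg 0$ need not lie in $V(I)$, because the fiber of $V(I)$ over $\bfw_A$ can be a compact (even a single) point, so no ray to infinity in the $B$-directions meets it. Concretely, take $I = \trop(\langle y-1\rangle) \subseteq \Rbar[x,y]$ with trivial valuation and $A = \{x\}$. Then $I \cap \Rbar[x] = \{\infty\}$, so $A \in \I(I)$, and $V(I) = \RR \times \{0\}$; but $(w_1, te)$ is never in $V(I)$ for $te \neq 0$. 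Your own criterion detects this correctly: for $e>0$ the polynomial $f = 0 \tplus y \in I$ has $\bfu_B^*(f)=0$ and $g_f = 0$, whose initial form is a monomial. Yet the extraction you propose as the crux --- producing from such an offending $f$ a non-trivial element of $I \cap \Rbar[x]$ --- is impossible here, since that intersection really is $\{\infty\}$. So the ``leading $\bfx_A$-part'' of an element of $I$ simply does not land in the elimination ideal, and no amount of saturation or valuated-matroid contraction will make it do so.

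The paper's proof of this direction goes the other way around: assuming $\bfw_A \notin \pi_A(V(I))$, it specialises the variables $x_i$ with $i \in A$ to the real numbers $w_i$, obtaining (by a result from \cite{MR2}) a tropical ideal $I|_{\bfw_A}$ in the remaining variables whose variety is empty (any point of it would lift to a point of $V(I)$ over $\bfw_A$). The tropical weak Nullstellensatz \cite{MR}*{Corollary 5.17} then forces the constant $0$ to lie in $I|_{\bfw_A}$, i.e.\ $f|_{\bfw_A} = 0$ for some $f \in I$; since tropical addition cannot cancel terms, this is only possible if every term of $f$ involving a variable outside $A$ has coefficient $\infty$, so $f \in I \cap \Rbar[x_i : i \in A]$ and $f \neq \infty$, whence $A \notin \I(I)$. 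If you want to salvage a direct construction of a preimage of $\bfw_A$ in $V(I)$, you would need an existence statement of Nullstellensatz type rather than a limit along a fixed ray; I recommend adopting the specialisation argument.
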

\begin{proof}
	Let $A \subseteq \{1,\dots,n\}$. If $A \notin \I(I)$ then there exists $f \in I
	\cap \Rbar[x_i : i \in A]$
	such that $f \neq \infty$, and $V(I) \subseteq V(f)$. We then have 
	$\pi_A(V(I)) \subseteq \pi_A(V(f)) \subsetneq \RR^A$, as claimed.
	For the reverse inclusion, suppose that $\pi_A(V(I)) \subsetneq \RR^A$, and let 
	$\bfw \in \RR^A \setminus \pi_A(V(I))$. For any polynomial $f \in
	\Rbar[x_1,\dotsc,x_n]$, denote
	by $f|_\bfw$ the polynomial in $\Rbar[x_i : i \notin A]$ obtained by specializing each variable
	$x_i$ with $i \in A$ to $w_i \in \RR$. Consider the ideal $I|_\bfw \subseteq
	\Rbar[x_i : i \notin A]$
	defined as $I|_\bfw \coloneqq \{ f|_\bfw : f \in I \}$. By \cite{MR2}*{Theorem 3.6}, the ideal
	$I|_\bfw$ is a tropical ideal. Moreover, we must have $V(I|_\bfw) = \emptyset$, as any point $\bfv \in V(I|_\bfw)$
	would lift to the point $(\bfv, \bfw) \in V(I)$, contradicting that $\bfw \notin \pi_A(V(I))$.
	By the weak Nullstellensatz \cite{MR}*{Corollary 5.17}, the tropical ideal $I|_\bfw$ must contain
	the constant polynomial $0$. But then $0 = f|_\bfw$ for some $f \in I$, which in particular implies
	that $f \in I \cap \Rbar[x_i : i \in A]$ and $f \neq \infty$. 
\end{proof}
Proposition \ref{pro:Independence} also follows from the fact that a coordinate projection of the
variety of a tropical ideal is the variety of the corresponding elimination ideal \cite{MR2}*{Theorem 4.7}.

Recall that the Hilbert function $H_I(e)$ of a tropical ideal $I \subseteq \Rbar[x_1,\dots,x_n]$ eventually agrees with a polynomial in $e$, called the {\bf Hilbert polynomial} of $I$ \cite{MR}*{Proposition 3.8}. The {\bf dimension} $\dim(I)$ of $I$ is defined as the degree of its Hilbert polynomial.

\begin{corollary}
	For any tropical ideal $I$ we have
	\[\dim \I(I) + 1 = \dim V(I) = \dim I.\]
\end{corollary}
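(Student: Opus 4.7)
The statement packages three equalities, and the plan is to establish the cyclic inequalities
$$\dim\I(I)+1 \leq \dim V(I) \leq \dim \I(I)+1 \quad\text{and}\quad \dim\I(I)+1 \leq \dim I \leq \dim\I(I)+1,$$
the first pair via Proposition \ref{pro:Independence} and the second via Theorem \ref{thm:Inn} combined with a reduction to a monomial initial ideal.

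The inequality $\dim \I(I)+1 \leq \dim V(I)$ is immediate: choosing $A \in \I(I)$ of maximum size, Proposition \ref{pro:Independence} gives $\pi_A(V(I))=\RR^A$, and since coordinate projections cannot increase polyhedral-complex dimension, $\dim V(I) \geq |A| = \dim \I(I)+1$. For the reverse inequality I plan to localise at a maximal polyhedron $\sigma \subseteq V(I)$ of dimension $d := \dim V(I)$: setting $\bfw \in \relint(\sigma)$ and $J := \inn_\bfw I$, Proposition \ref{prop:Initial} gives $V(J) = \starr_\bfw V(I) \supseteq \spann(\sigma) =: L$, a $d$-dimensional linear subspace of $\RR^n$. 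I then pick $A \subseteq \{1,\ldots,n\}$ of size $d$ such that $\pi_A|_L$ is an isomorphism onto $\RR^A$, and claim $A \in \I(J)$: any non-$\infty$ polynomial $f \in J \cap \BB[x_i : i \in A]$ would satisfy $V(J) \subseteq V(f)$, but since $f$ only involves the $A$-variables one has $V(f)=\pi_A^{-1}(V_A(f))$ for the tropical hypersurface $V_A(f) \subsetneq \RR^A$ cut out by $f$ viewed in $\RR^A$; then $\RR^A = \pi_A(L) \subseteq \pi_A(V(J)) \subseteq V_A(f) \subsetneq \RR^A$, a contradiction. The direct containment $\I(J) \subseteq \I(I)$---any non-$\infty$ $f \in I \cap \Rbar[x_i:i\in A]$ has $\inn_\bfw f \in J \cap \BB[x_i:i\in A]$ also non-$\infty$---then places $A$ in $\I(I)$, so $d=|A| \leq \dim\I(I)+1$.

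For $\dim I = \dim \I(I) + 1$, the lower bound follows from a standard tropical-independence count: if $A \in \I(I)$ has size $k$, then the $\binom{k+d}{k}$ monomials of degree at most $d$ in $\{x_i : i \in A\}$ are linearly independent in the matroid $M(I_{\leq d})$, because any non-$\infty$ tropical linear combination would be a non-$\infty$ element of $I \cap \Rbar[x_i : i \in A]$, violating $A \in \I(I)$. Hence $H_I(d) = \rk M(I_{\leq d}) \geq \binom{k+d}{k}$, forcing $\dim I \geq k$. For the upper bound I would pick $\bfw$ generic enough that $J' := \inn_\bfw I$ is a monomial tropical ideal in $\BB[x_1,\ldots,x_n]$. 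Theorem \ref{thm:Inn} gives $\dim I = \dim J'$, and for a monomial ideal the usual partition of standard monomials by support yields $\dim J' = \max\{|A| : A \in \I(J')\}$; combined with the containment $\I(J') \subseteq \I(I)$ (exactly as above), this gives $\dim I \leq \dim \I(I) + 1$.

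The main obstacle is the existence of a $\bfw$ for which $\inn_\bfw I$ is generated by monomials---the tropical analogue of choosing a monomial order in classical Gröbner theory, which I expect to follow from the Gröbner-complex apparatus developed in \cite{MR}. A cleaner alternative would be to compute $\dim J$ directly for any tropical ideal $J \subseteq \BB[\bfx]$ by analysing the matroid structure of each $J_{\leq d}$, thereby bypassing monomialisation entirely.
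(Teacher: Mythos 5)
Your proposal is correct in outline, and for the equality $\dim\I(I)+1=\dim V(I)$ it is in the same spirit as the paper, though you take a detour the paper avoids: rather than passing to $\inn_\bfw I$ and using Proposition \ref{prop:Initial} to make the projection $\pi_A$ surjective onto $\RR^A$, the paper simply projects a top-dimensional polyhedron $\sigma\subseteq V(I)$ injectively onto $d$ coordinates; since a tropical hypersurface $V(f)$ with $f\in R[x_i:i\in A]$ nonzero has no interior in the $A$-directions, full-dimensionality of $\pi_A(\sigma)$ already forces $A\in\I(I)$ via \eqref{eq:indepalgebraic}, with no need for the star or for the containment $\I(\inn_\bfw I)\subseteq\I(I)$. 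The real divergence is in the second equality: the paper does not prove $\dim V(I)=\dim I$ at all but cites the (unpublished) reference \cite{MR2}, whereas you prove $\dim I=\dim\I(I)+1$ directly. Your lower bound is essentially Proposition \ref{pro:lowerbound} (your matroid-rank argument, using $H_I(d)=\rk M(I_{\leq d})$, is a clean substitute for the stable-intersection proof), and your upper bound via a monomial initial ideal mirrors the technique of Proposition \ref{pro:upperbound}; this buys you a self-contained proof not resting on a paper in preparation. However, be aware that what you flag as ``the main obstacle'' is not where the remaining work lies: the existence of monomial initial ideals is already supplied by the discussion after Theorem \ref{thm:Inn}, but only for \emph{homogeneous} tropical ideals, so for general $I$ you must homogenise, and then you need to track how $\I(I^h)\subseteq 2^{\{0,1,\dots,n\}}$ relates to $\I(I)$ (e.g.\ that $A\in\I(I^h)$ forces $A\setminus\{0\}\in\I(I)$, via dehomogenisation) and keep the affine versus projective dimension conventions straight. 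Together with the standard combinatorial fact that a monomial ideal $J$ has $\dim J=\max\{|A|:A\in\I(J)\}$, these are routine but genuinely required steps that your write-up leaves open.
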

\begin{proof}
	From \eqref{eq:indepgeometric} it is clear that $\dim V(I) \geq \dim\I(I)+1$. Now, if $V(I)$ contains a polyhedron $\sigma$ of dimension $d$ then there is some coordinate projection $\pi_A(\sigma)$ that is $d$-dimensional, and thus from \eqref{eq:indepalgebraic} we see that $A \in \I(I)$ and thus $\dim \I(I) + 1 \geq d$. This shows that $\dim \I(I) + 1 = \dim V(I)$. The equality $\dim V(I) = \dim I$ is proved in \cite{MR2}*{Theorem 4.3}.
\end{proof}

In the classical setting, primality of an ideal implies matroidality. 
We do not know about a similarly appealing sufficient condition
for matroidality of general tropical ideals.

\begin{example}\label{ex:matroidal}
	If $J \subseteq K[x_1,\dotsc,x_n]$ is a prime ideal, where $K$ is a
	field with a non-Archimedean valuation, then $\trop(J)$ is a matroidal tropical ideal.
	Its associated algebraic matroid is the matroid that captures algebraic independence
	among the coordinate functions $x_1, \dots, x_n$ in the field of fractions of $K[x_1,\dots,x_n]/J$. 
\end{example}

\begin{problem}\label{prob:matroidal}
Find algebraic conditions on a tropical ideal that imply matroidality.
\end{problem}

As shown in Example \ref{ex:matroidal}, any (classically) algebraic matroid is the algebraic matroid of a tropical ideal.
However, in principle, it is possible that the class of matroids that are ``tropically algebraic'' is strictly larger than the usual class of algebraic matroids. 

\begin{question}\label{que:algebraic}
Which matroids arise as the algebraic matroid of a tropical ideal?
\end{question}

\section{Quasi-products of matroids}  \label{sec:QuasiProducts}

To motivate the definition of quasi-products, let $v_1,\ldots,v_m$
be nonzero vectors in a vector space $V$ and let $w_1,\ldots,w_n$ be
nonzero vectors in a vector space $W$ over the same field. The $v_i$
define a matroid $M$ with ground set $[m]$ in which $S \subseteq
[m]$ is dependent if and only if the set $\{v_i : i \in S\}$ is linearly
dependent. Similarly, the $w_j$ define a matroid $N$ with ground
set $[n]$. Now consider the vectors $v_i \otimes v_j \in V \otimes W,\
i \in [m], j \in [n]$. In the same manner, these define a matroid $P$
with ground set $[m] \times [n]$. One can check that $P$ is in general
not determined by $M$ and $N$, i.e., the linear
dependencies among the $v_i \otimes w_j$ cannot be read off from 
those among the $v_i$ and those among the
$w_j$. However, some features of $P$
{\em are} predicted by $M$ and $N$: for each fixed $i \in [m]$, the linear dependencies among the
vectors $v_i \otimes w_j,\ j \in [n]$ are precisely those recorded by
$N$;  here we use that $v_i$ is nonzero. Similarly, for each $j \in
[n]$, the restriction of $P$ to $[m] \times \{j\}$ is isomorphic to
$M$. Furthermore, if $B$ is a basis of $M$ and $C$ is a basis of $N$,
then $B \times C$ is a basis of $P$. In particular, the rank of $P$ is the
product of the ranks of $M$ and $N$.  Following Las Vergnas, we use these
observations to define quasi-products of general matroids, as follows.

\begin{definition}[\cite{Vergnas81}] \label{de:QuasiProduct}
Let $M,N$ be loopless matroids with ground sets $[m],[n]$, respectively. A
{\em quasi-product} of $M$ and $N$ is a matroid $P$ with ground set $[m]
\times [n]$ with the property that for each $i \in [m]$ the map $[n]
\to [m] \times [n], j \mapsto (i,j)$ is an isomorphism from $M$ to the
restriction of $P$ to $\{i\} \times [n]$ and for each $j \in [n]$ the
map $[m] \to [m] \times [n], i \mapsto (i,j)$ is an isomorphism from $M$
to the restriction of $P$ to $[m] \times \{j\}$.
\end{definition}

The properties of a quasi-product $P$ of $M$ and $N$ imply that if $B
\subseteq [m]$ is a basis of $M$ and $C \subseteq [n]$ is a basis of
$N$, then $B \times C$ is a spanning set of $P$, so the
rank of $P$ is at most the product of the ranks of $M$ and
$N$. By the discussion above, two matroids that are representable over the
same field always admit a quasi-product whose rank is the product of
their ranks. In general, however, a quasi-product with this property
need not exist.

\begin{theorem}[\cite{Vergnas81}] \label{thm:Vergnas}
Any quasi-product of the rank-4 V\'amos matroid $V_8$ and the
rank-2 uniform matroid $U_{2,3}$ has rank at most $7< 4
\cdot 2$.
\end{theorem}

\section{Not every Bergman fan is the variety of a tropical ideal}

We now prove that not every balanced polyhedral complex can be
obtained as the variety of a tropical ideal. Our counterexample will 
be the Bergman fan of a matroid; see \cite{Ardila06} for details.

\begin{definition}
Let $M$ be a loopless matroid of rank $d$ on the ground set $\{1,\dots,n\}$. 
The {\bf Bergman fan} $\B(M)$ of $M$ is the pure $d$-dimensional polyhedral fan in $\RR^n$ 
consisting of the cones of the form 
\[\sigma_\F \coloneqq  \cone(\bfe_{F_1},\bfe_{F_2},\dotsc,\bfe_{F_k}) + \RR \!\cdot\! 
\bfe_{\{1,\dotsc, n\}}\] where 
$\F = \{\emptyset \subsetneq F_1 \subsetneq F_2 \subsetneq \dotsb \subsetneq F_k 
\subsetneq \{1,\dotsc, n\}\}$ is a chain of flats in 
the lattice of flats $\mathcal L(M)$ of $M$, and where $\bfe_S$ stands
for the sum of the standard basis vectors $\bfe_i$ with $i$ running
through $S$. 
The Bergman fan of any matroid is given the structure of a balanced polyhedral complex 
by defining the multiplicity of each maximal cone to be equal to 1.
\end{definition}

Bergman fans of matroids are the tropical linear spaces (more specifically, their part inside the torus $\RR^n$) that correspond to valuated matroids where the basis valuations all take values in $\BB$.

The following is our main result.

\begin{theorem}\label{thm:main}
Let $M$ be a loopless matroid of rank $a$ with ground set $[m]$ and let $N$
be a loopless matroid of rank $b$ with ground set $[n]$.  Suppose that
every quasi-product of $M$ and $N$ has rank strictly less than
$a\cdot b$. Then there exists no tropical ideal $I \subseteq
\Rbar[x_1,\dots,x_m,y_1,\dots,y_n]$ such that $V(I)$ is equal to $\B(M
\oplus N)$ as weighted polyhedral complexes, even up to common refinement.

In particular, there is no tropical ideal $I \subseteq \Rbar[x_1, \dots,
x_3, y_1, \dots, y_8]$ such that $V(I)$ is equal to $\B(U_{2,3} \oplus
V_8)$ as weighted polyhedral complexes, even up to common refinement.
\end{theorem}

Note that we do not require the polyhedral structure on $V(I)$
coming from the Gr\"obner complex of the homogenisation of $I$ to be equal
to the fan structure on the Bergman fan described above.

To prove the theorem, in addition to the fundamental results from
Section~\ref{sec:Basics}, we will need results relating $V(I)$
to $H_I$ for any tropical ideal $I$.

\begin{lemma} \label{lm:Intersection}
Let $L,L' \subseteq R^N$ be tropical linear spaces. If $\dim L +
\dim L'>|N|$, then $L \cap L' \neq \{(\infty,\ldots,\infty)\}$. 
\end{lemma}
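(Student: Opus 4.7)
The plan is to induct on $|N|$. The base case $|N|=0$ is vacuous. For the inductive step I set $M := M(L)$ and $M' := M(L')$, so that the hypothesis $\dim L + \dim L' > |N|$ translates into $\rk(M) + \rk(M') < |N|$, and I then perform a case analysis on structural features of $M$ and $M'$.

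First, I would dispose of the case in which some element $i \in N$ is a loop of both $M$ and $M'$: then each of $L$ and $L'$ contains a vector supported only at position $i$, and by the $R$-module property of a tropical linear space, each of $L$ and $L'$ in fact contains every vector of the form $(\infty, \ldots, c, \ldots, \infty)$ with $c \in R$ in coordinate $i$, so any such vector with $c \neq \infty$ lies in $L \cap L'$ and witnesses the claim. Next I would treat the case where some $i \in N$ is a coloop of $M$ (a coloop of $M'$ being symmetric): then $L \subseteq \{v_i = \infty\}$ and $L$ can be viewed as a tropical linear space $\tilde L \subseteq R^{N \setminus \{i\}}$ of the same polyhedral dimension $\dim L$, while $\tilde L' := L' \cap \{v_i = \infty\}$, regarded in $R^{N \setminus \{i\}}$, is a tropical linear space of dimension at least $\dim L' - 1$. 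The resulting inequality $\dim \tilde L + \dim \tilde L' > |N| - 1$ activates the inductive hypothesis, and any non-trivial vector in $\tilde L \cap \tilde L'$ lifts to one in $L \cap L'$.

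After these two reductions, the remaining situation is that neither $M$ nor $M'$ has coloops and they share no common loops. Under the coloop-free condition, the ``finite parts'' $L \cap \RR^N$ and $L' \cap \RR^N$ are pure-dimensional balanced polyhedral complexes in $\RR^N$ of dimensions $\dim L$ and $\dim L'$ respectively, with dimension sum strictly exceeding $|N|$. The plan here is to invoke tropical stable intersection: their stable intersection inside $\RR^N$ is a non-empty balanced polyhedral subcomplex of dimension $\dim L + \dim L' - |N| \geq 1$, contained in $L \cap L'$, and any point of it is a vector of $L \cap L'$ with all coordinates finite, hence distinct from $(\infty, \ldots, \infty)$.

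The main obstacle is precisely this last step, namely justifying the non-emptiness of the stable intersection of two full-dimensional tropical linear spaces whose codimensions sum to less than $|N|$. A purely matroid-theoretic alternative is to invoke the matroid union theorem to extract a subset $T \subseteq N$ with $\rk_M(T) + \rk_{M'}(T) < |T|$, restrict $L$ and $L'$ to the face at infinity indexed by $N \setminus T$, and induct on $|T|$; this closes the argument whenever $T \subsetneq N$, but when $T = N$ is the only witness one genuinely needs the stable-intersection machinery (or an equivalent appeal to the balancing of tropical linear spaces) to produce a common non-$\infty$ vector.
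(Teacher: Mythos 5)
Your strategy ultimately coincides with the paper's: the lemma is an application of stable intersection of tropical linear spaces. However, the step you yourself flag as ``the main obstacle'' is precisely the content of the lemma, and as written it remains a genuine gap. In particular, your fallback suggestion that non-emptiness could come from ``an equivalent appeal to the balancing of tropical linear spaces'' does not work: balancedness alone never forces a stable intersection to be non-empty. For example, the plane $\{x_3=0\}$ in $\RR^3$, taken twice, gives two balanced pure-dimensional cycles (even fans through the origin) with $2+2>3$, yet a generic translate of one is disjoint from the other, so their stable intersection is empty. What saves the argument for tropical linear spaces is the specific result the paper cites: by Speyer (for uniform underlying matroids) and Mundinger (for arbitrary tropical linear spaces in $R^N$), the stable intersection $L \cap_{\st} L'$ is again a \emph{tropical linear space}, it is contained in $L\cap L'$, and its dimension is at least $\dim L + \dim L' - |N| > 0$; a tropical linear space of positive dimension has a circuit and hence a vector different from $(\infty,\dots,\infty)$. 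You must invoke (or prove) this statement explicitly; the general theory of balanced complexes cannot substitute for it.

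Two further remarks. Your preliminary reductions are correct --- the common-loop case, and the coloop case via passing to the hyperplane at infinity with the count $\dim\tilde L' \geq \dim L' - 1$ --- but they are unnecessary: Mundinger's version of stable intersection already handles arbitrary tropical linear spaces in $R^N$, coordinates at infinity, loops and coloops included, which is why the paper needs no case analysis. The matroid-union detour at the end stalls, as you concede, exactly at the case $T=N$, i.e.\ at the same missing ingredient; moreover it cannot be pushed through purely at the level of the underlying matroids $M(L)$ and $M(L')$, since the actual set $L\cap L'$ depends on the valuations and not only on these matroids. So the honest summary is: right tool, but the decisive non-emptiness statement is asserted rather than established.
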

\begin{proof}
The notion of stable intersection for tropical linear spaces was 
studied by Speyer in \cite{Speyer04} when the underlying matroids of both
tropical linear spaces were uniform matroids, and later generalized by
Mundinger \cite{Mundinger} for arbitrary tropical linear spaces in $R^N$.
The stable intersection $L \cap_{\st} L'$ is a tropical linear space 
contained in both $L$ and $L'$, and it has dimension a least
$\dim L + \dim L' - |N| > 0$, which implies the desired result.
\end{proof}

\begin{proposition}\label{pro:lowerbound}
Let $I \subseteq R[x_1,\dots,x_n]$ be a tropical ideal. 
If the independence complex $\I(I)$ contains a subset $A$ of size $r$, then
$H_I(d) \geq \binom{r+d}{d}$ for all $d \in \NN$.
\end{proposition}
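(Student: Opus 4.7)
The plan is to translate the condition $A \in \I(I)$ into a matroid-theoretic statement about the tropical linear space $I_{\leq d} \subseteq R^{\monld}$, and then read off the required lower bound from the rank of the associated matroid.

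First, I would rewrite the Hilbert function in matroidal terms. Since $I_{\leq d}$ is a tropical linear space in $R^{\monld}$, it has an associated matroid $M(I_{\leq d})$ with ground set $\monld$, and the formula $\dim L = |N| - \rk(M(L))$ recalled in Section~\ref{sec:Basics} gives
\[ H_I(d) = \binom{n+d}{d} - \dim I_{\leq d} = |\monld| - \dim I_{\leq d} = \rk(M(I_{\leq d})). \]
So the task reduces to exhibiting an independent set of size $\binom{r+d}{d}$ in $M(I_{\leq d})$.

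The natural candidate is the set $B \subseteq \monld$ of those monomials of degree at most $d$ that involve only the variables $x_i$ with $i \in A$. There are exactly $\binom{r+d}{d}$ such monomials, since $|A|=r$. By the definition of the matroid $M(I_{\leq d})$ associated to a tropical linear space, $B$ is independent precisely when the only element of $I_{\leq d}$ supported on $B$ is $\infty$; equivalently, when $I_{\leq d} \cap R[x_i : i \in A]_{\leq d} = \{\infty\}$. But this is immediate from the hypothesis $A \in \I(I)$, which by \eqref{eq:indepalgebraic} states that $I \cap R[x_i : i \in A] = \{\infty\}$.

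Thus $B$ is independent in $M(I_{\leq d})$, so $\rk(M(I_{\leq d})) \geq |B| = \binom{r+d}{d}$, and combining with the identity above yields $H_I(d) \geq \binom{r+d}{d}$ as required. There is no real obstacle in this argument; the only delicate point is keeping the two matroids straight (the abstract algebraic matroid $\I(I)$ on $\{1,\dots,n\}$ versus the matroid $M(I_{\leq d})$ on the ground set of monomials $\monld$), and observing that an independent set $A$ of the former produces, tautologically, the independent set $B$ of monomials in the latter.
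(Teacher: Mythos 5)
Your proof is correct, but it takes a genuinely different route from the paper's. The paper invokes its Lemma~\ref{lm:Intersection} on stable intersections of tropical linear spaces (resting on the work of Speyer and Mundinger): since $I_{\leq d}$ and the tropical linear space $R[x_i : i \in A]_{\leq d}$ meet only in $\{\infty\}$, their dimensions must sum to at most $\binom{n+d}{d}$, which rearranges to the desired bound. You instead pass to the matroid $M(I_{\leq d})$ on the ground set $\monld$, identify $H_I(d)$ with $\rk(M(I_{\leq d}))$ via the relation $\dim L = |N| - \rk(M(L))$ stated in Section~\ref{sec:Basics}, and exhibit the $\binom{r+d}{d}$ monomials in the variables indexed by $A$ as an independent set, which by the definition of $M(L)$ is exactly the condition $I_{\leq d} \cap R[x_i : i \in A]_{\leq d} = \{\infty\}$ supplied by $A \in \I(I)$. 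Your argument is in effect the special case of Lemma~\ref{lm:Intersection} in which one of the two tropical linear spaces is a coordinate subspace; in that case the stable-intersection machinery collapses to the tautology that an independent set of a matroid has cardinality at most the rank. So your version is more elementary and self-contained, whereas the paper's buys a more general intersection statement that it happens not to need elsewhere.
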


\begin{proof}
The space $R[x_i:i \in A]_{\leq d}$ is a tropical linear space
in $R^{\monld}$ of dimension $\binom{r+d}{d}$ and, by assumption,
it does not intersect $I_{\leq d}$.  Hence by Lemma~\ref{lm:Intersection},
$\dim I_{\leq d} \leq \binom{n+d}{d} - \binom{r+d}{d}$, and therefore
$H_I(d) \geq \binom{r+d}{d}$. 
\end{proof}

\begin{proposition}\label{pro:upperbound}
Let $I \subsetneq R[x_1,\dots,x_n]$ be a tropical ideal, and set
$r:=H_I(1)-1$. Then $H_I(d) \leq \binom{r+d}{d}$ for all $d \in \NN$. 
\end{proposition}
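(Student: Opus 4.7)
The plan is to bound $H_I(d)$ from above by reducing to a monomial tropical ideal, for which the Hilbert function has a transparent combinatorial description. First, I would homogenise: let $I^h \subseteq R[x_0, x_1, \ldots, x_n]$ denote the homogenisation of $I$. As recalled in the paragraph before Theorem~\ref{thm:Inn}, the Hilbert function $H_I$ coincides with the homogeneous Hilbert function of $I^h$, so in particular $H_{I^h}(1) = r+1$, and thus $\dim I^h_1 = (n+1) - (r+1) = n - r$ as a tropical linear space in $R^{n+1}$.

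Next, I would invoke the ``pass to a monomial initial ideal'' device alluded to right after Theorem~\ref{thm:Inn} (and developed in \cite{MR}): for a suitable $\bfw \in \RR^{n+1}$, the initial ideal $J := \inn_\bfw I^h$ is a homogeneous \emph{monomial} tropical ideal in $\BB[x_0,\ldots,x_n]$, and by Theorem~\ref{thm:Inn} it has the same Hilbert function as $I^h$, hence as $I$. For a monomial tropical ideal, each graded piece $J_d$ is the coordinate tropical linear space consisting of all polynomials supported on the set $T_d$ of degree-$d$ monomials belonging to $J$; therefore $H_J(d) = \binom{n+d}{d} - |T_d|$ simply counts the \emph{standard} degree-$d$ monomials, i.e., those outside $T_d$.

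The bound now follows from a direct combinatorial count. Since $H_J(1) = r+1$, there are exactly $r+1$ standard linear monomials; after relabelling, these are $x_0, \ldots, x_r$, while $x_{r+1},\ldots,x_n \in J$. By the ideal property, every monomial of degree $d$ divisible by some $x_j$ with $j>r$ lies in $J$ and is therefore non-standard. Hence the degree-$d$ standard monomials all lie in $R[x_0,\ldots,x_r]_d$, a set of cardinality $\binom{r+d}{d}$, giving $H_I(d) = H_J(d) \leq \binom{r+d}{d}$.

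The only step with genuine subtlety is the passage to a monomial initial ideal: unlike the classical Gr\"obner-basis reduction, one cannot simply pick leading terms of a finite generating set, since tropical ideals are generally not finitely generated. This tropical analogue of a Gr\"obner degeneration is precisely the one referenced in the paper immediately after Theorem~\ref{thm:Inn} and carried out in \cite{MR}; granted it, the remainder of the argument is routine. Note also that the strategy is dual, in spirit, to that of Proposition~\ref{pro:lowerbound}, which used Lemma~\ref{lm:Intersection} to derive a lower bound on $H_I$; here Lemma~\ref{lm:Intersection} is not directly applicable because we need a lower bound on $\dim I_{\leq d}$, and so we instead exploit monomial initial ideals, where the dimension is read off combinatorially.
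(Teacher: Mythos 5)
Your proposal is correct and follows essentially the same route as the paper's proof: homogenise, pass to a monomial initial ideal with the same Hilbert function via Theorem~\ref{thm:Inn} with a generic weight, observe that the monomial ideal contains $n-r$ of the $n+1$ variables and hence all their multiples, and count the remaining standard monomials. The only difference is presentational (you phrase the count in terms of standard monomials rather than a lower bound on $\dim J_d$), which changes nothing of substance.
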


\begin{proof}
Let $I^h \subseteq R[x_0,\ldots,x_n]$ be the homogenisation
of $I$. Then $\dim (I^h)_d=\dim I_{\leq d}$ for all $d \in \NN$, and in
particular $\dim (I^h)_1=\dim I_{\leq 1}=n+1-H_I(1)=n-r$. Moreover,
by applying Theorem~\ref{thm:Inn} with a sufficiently general weight vector $\bfw$, 
the Hilbert function of $I^h$ is also that of
some monomial ideal $J$. We find that $J$ contains precisely $n-r$ of the
$n+1$ variables $x_0,\ldots,x_n$, and therefore all their multiples. 
This implies that $\dim
J_d \geq \binom{n+d}{d}-\binom{r+d}{d}$, where the last term counts
monomials in the remaining $r+1$ variables of degree $d$. 
We then have
\[ \textstyle H_I(d)=\binom{n+d}{d}-\dim I_{\leq d}=\binom{n+d}{d}-\dim J_d
\leq \binom{n+d}{d}-\binom{n+d}{d}+\binom{r+d}{d}, \]
as desired.
\end{proof}

The following proposition shows that the algebraic matroid of a Bergman fan $\B(M)$ (as in Proposition \ref{pro:Independence}) is equal to the matroid $M$.

\begin{proposition}[{\cite{Yu}*{Lemma 3}}]\label{pro:complex}
The independence complex of the Bergman fan $\B(M)$ of a loopless
matroid $M$ is the same as the independence complex of $M$.
\end{proposition}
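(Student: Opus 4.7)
The plan is to use two equivalent descriptions of the Bergman fan $\B(M)$: the defining one via chains of flats, and the dual cryptomorphic characterization stating that $\bfw \in \B(M)$ if and only if for every circuit $C$ of $M$ the minimum $\min_{i \in C} w_i$ is attained by at least two indices. By Proposition~\ref{pro:Independence}, the independence complex of $\B(M)$ is $\{A : \pi_A(\B(M)) = \RR^A\}$, so the task reduces to showing that $\pi_A(\B(M)) = \RR^A$ if and only if $A$ is independent in $M$.

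For the direction that $A$ dependent implies $\pi_A(\B(M)) \neq \RR^A$, pick a circuit $C \subseteq A$ and a vector $\bfv \in \RR^A$ whose minimum over $C$ is attained at a unique index---for instance, $v_{i_0} = 0$ for some $i_0 \in C$, $v_j = 1$ for all other $j \in C$, and $v_j = 0$ for $j \in A \setminus C$. Any $\bfw \in \B(M)$ must have $\min_{i \in C} w_i$ attained at least twice, which rules out $\pi_A(\bfw) = \bfv$.

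For the other direction, let $A = \{a_1, \dots, a_k\}$ be independent and $\bfv \in \RR^A$ arbitrary. Order the elements of $A$ so that $v_{a_1} \geq v_{a_2} \geq \cdots \geq v_{a_k}$ and extend $A$ to a basis $\{a_1, \dots, a_k, b_{k+1}, \dots, b_d\}$ of $M$. Form a maximal flag of proper flats $F_1 \subsetneq F_2 \subsetneq \cdots \subsetneq F_{d-1}$ by setting $F_\ell := \operatorname{cl}(\{a_1, \dots, a_\ell\})$ for $\ell \leq k$ and $F_\ell := \operatorname{cl}(F_{\ell-1} \cup \{b_\ell\})$ for $k < \ell \leq d-1$. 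Taking $\lambda_\ell := v_{a_\ell} - v_{a_{\ell+1}} \geq 0$ for $\ell = 1, \dots, k-1$, $\lambda_\ell := 0$ for $\ell = k, \dots, d-1$, and $\mu := v_{a_k}$, the vector
\[
\bfw := \sum_{\ell=1}^{d-1} \lambda_\ell \bfe_{F_\ell} + \mu\, \bfe_{\{1, \dots, n\}}
\]
lies in $\sigma_\F \subseteq \B(M)$; since $a_\ell \in F_j$ precisely when $j \geq \ell$, a telescoping computation yields $w_{a_\ell} = v_{a_\ell}$ for every $\ell$, so $\pi_A(\bfw) = \bfv$. The only delicate step is ordering $A$ by descending $v$-values, which is what forces the $\lambda_\ell$ to be non-negative; the boundary cases $k = 0$ and $k = d$ require no modification.
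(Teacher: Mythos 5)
Your proof is correct. Note that the paper offers no argument of its own here: it simply cites Lemma~3 of \cite{Yu}, and your write-up is essentially a self-contained reconstruction of that standard argument. Both halves check out: for dependent $A$ you correctly use the circuit description of $\B(M)$ (with the min convention, which is the one consistent with the paper's cones $\cone(\bfe_{F_1},\dots,\bfe_{F_k})+\RR\bfe_{\{1,\dots,n\}}$ --- this sign/convention match is the one point worth verifying explicitly, and it does hold, e.g.\ for $U_{2,3}$); for independent $A$ the flag $F_\ell=\operatorname{cl}(\{a_1,\dots,a_\ell\})$ built from the descending ordering of $\bfv$ gives nonnegative coefficients $\lambda_\ell$, and the telescoping identity $w_{a_\ell}=\sum_{j\ge\ell}\lambda_j+\mu=v_{a_\ell}$ is right because independence of $A$ guarantees $a_\ell\notin F_j$ for $j<\ell$. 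The only cosmetic gaps are that the equivalence of the chain-of-flats and circuit descriptions is itself a (textbook) theorem of \cite{Ardila06} rather than a definition, so it should be cited rather than called a ``cryptomorphic characterization'' in passing, and that for $k=0$ the quantity $\mu=v_{a_k}$ is undefined (though the claim is vacuous there). Neither affects correctness.
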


We now present a key step towards proving our main result.

\begin{proposition}\label{pro:degree1}
Let $M$ be a loopless matroid on the ground set $\{1,\dots,n\}$. 
Suppose $J \subseteq \BB[x_1,\dots,x_n]$ is a homogeneous tropical ideal,
saturated with respect to $x_1 \cdots x_n$, whose variety $V(J)$ has a
common refinement, as weighted polyhedral complexes, with the Bergman
fan $\B(M)$ (with weight 1 in all its
maximal cones).  Then the matroid $M(J_1)$ is equal to $M$, under the
identification $x_i \leftrightarrow i$ of ground sets.
\end{proposition}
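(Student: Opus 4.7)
The plan is to prove $M(J_1)=M$ by establishing equality of their independence complexes. First, Proposition~\ref{pro:Independence} extends verbatim to tropical ideals over $\BB$: passing from $J$ to the extension $J\cdot\Rbar[x_1,\dots,x_n]\subseteq\Rbar[x_1,\dots,x_n]$ preserves both the variety and the independence complex. Combined with Proposition~\ref{pro:complex}, this yields $\I(J)=\I(M)$. The inclusion $J_1\subseteq J$ immediately gives $\I(J)\subseteq\I(M(J_1))$, and hence $\I(M)\subseteq\I(M(J_1))$. Applying Proposition~\ref{pro:lowerbound} to a basis of $M$ (which now lies in $\I(J)$) and Proposition~\ref{pro:upperbound} then gives the Hilbert-function sandwich $\binom{d+e}{e}\leq H_J(e)\leq\binom{r+e}{e}$ with $d:=\rank M$ and $r:=\rank M(J_1)$, forcing $r\geq d$.

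The crux of the argument is the reverse inclusion $\I(M(J_1))\subseteq\I(M)$, equivalently: every circuit $C=\{c_1,\dots,c_k\}$ of $M$ is dependent in $M(J_1)$. I would fix such a $C$, extend the partial flag $\operatorname{cl}_M(\{c_1\})\subsetneq\operatorname{cl}_M(\{c_1,c_2\})\subsetneq\cdots\subsetneq\operatorname{cl}_M(C)$ to a maximal flag $\F=(F_1\subsetneq\cdots\subsetneq F_{d-1})$ of flats of $M$, and pick a generic $\bfw$ in the relative interior of the maximal cone $\sigma_\F$ of $\B(M)$. Since the proper subset $\{c_1,\dots,c_{k-2},c_k\}$ of the circuit $C$ is independent, $c_k\notin F_{k-2}$, so $c_{k-1}$ and $c_k$ both lie in the top level $F_{k-1}\setminus F_{k-2}$ while $c_1,\dots,c_{k-2}$ occupy distinct lower levels. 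By Proposition~\ref{prop:Initial}, $V(\inn_\bfw J)=\spann(\sigma_\F)=:L$, a $d$-dimensional linear subspace carrying multiplicity $1$. The key step is to exploit this multiplicity-$1$ hypothesis --- in its coordinate-free formulation (Remark~\ref{re:Mult}), where the associated zero-dimensional quotient tropical ideal is forced to have degree $1$ --- together with the saturation of $J$, to deduce that $(\inn_\bfw J)_1$ coincides with the full degree-$1$ part $(I_L)_1$ of the tropical linear ideal of $L$. The linear form $x_{c_{k-1}}\oplus x_{c_k}$ vanishes on $L$ and hence lies in $(\inn_\bfw J)_1$; inspecting the initial recipe, it must equal $\inn_\bfw(f)$ for some $f\in J_1$ whose support lies in $F_{k-1}=\operatorname{cl}_M(C)$ and meets the top level exactly in $\{c_{k-1},c_k\}$.

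To finish, I would iterate this construction using maximal flags that preserve the top flat $F_{k-1}$ but refine its predecessors differently, and then combine the resulting linear forms of $J_1$ via the elimination axiom for tropical linear spaces to whittle the support of $f$ down to $C$ itself. Once every circuit of $M$ is dependent in $M(J_1)$, any set of size exceeding $d$ contains such a circuit and is thus dependent in $M(J_1)$, forcing $r=d$; conversely, any set independent in $M(J_1)$ contains no circuit of $M$ and hence lies in $\I(M)$. The resulting equality $\I(M)=\I(M(J_1))$ yields $M=M(J_1)$. The hardest part will be the identification $(\inn_\bfw J)_1=(I_L)_1$ at maximal cones, where the multiplicity-$1$ hypothesis must be used essentially; the subsequent refinement of $\supp(f)$ down to $C$ is secondary but still requires careful management of the tropical linear space structure on $J_1$.
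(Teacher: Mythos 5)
Your first half is sound and overlaps substantially with the paper's own mechanism: the multiplicity-one hypothesis at a maximal cone $\sigma_\F$, read through Remark~\ref{re:Mult}, forces the binomials $x_i\oplus x_j$ (for $i,j$ in the same level $F_m\setminus F_{m-1}$) into $\inn_\bfw(J')$, and saturation plus homogeneity lifts these to elements $x_i\oplus x_j\oplus f\in J_1$ with $\supp(f)\subseteq F_{m-1}$. Your inclusion $\I(M)=\I(J)\subseteq\I(M(J_1))$ and the Hilbert-function bound $\rk M(J_1)\geq\rk M$ are also fine (the paper instead derives the opposite inequality $\rk M(J_1)\leq\rk M$ from the same lifted binomials, by observing that a basis of $M$ spans $M(J_1)$).

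The genuine gap is the step you call ``secondary'': producing, for each circuit $C$ of $M$, an element of $J_1$ supported inside $C$. Your flag construction only yields $f\in J_1$ with $\supp(f)\subseteq\{c_{k-1},c_k\}\cup\operatorname{cl}_M(\{c_1,\dots,c_{k-2}\})$, and that closure can contain many elements outside $C$. The proposed remedy --- vary the maximal flag below $\operatorname{cl}_M(C)$ and eliminate variables --- is not an argument: over $\BB$ the elimination axiom applied to $f,g$ at a coordinate $m$ only guarantees $\supp(f)\,\triangle\,\supp(g)\subseteq\supp(h)\subseteq(\supp(f)\cup\supp(g))\setminus\{m\}$, so each step can reintroduce as many extraneous coordinates as it removes, and the supports of the forms coming from different flags are controlled by different flats whose intersection need not lie in $C$. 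There is no evident invariant that makes this process converge to support exactly $C$, and you give none. The paper sidesteps this entirely: having shown $\rk M(J_1)\leq\rk M$, it notes that the tropical prevariety cut out by $J_1$ is $\B(M(J_1))$, so $\B(M(J_1))\supseteq V(J)=\B(M)$, and then invokes \cite{Rincon}*{Lemma 7.4} to conclude $M(J_1)=M$. If you want to keep your circuit-by-circuit strategy, you need either to prove the whittling claim (which looks at least as hard as the cited lemma) or to replace it by an appeal to the Bergman-fan containment, for which your rank inequality points the wrong way and would have to be supplemented by the paper's spanning-set argument.
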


\begin{proof}
Let $B = \{b_1, \dots, b_d\}$ be a basis of $M$. For $0 \leq i \leq
d$, consider the flat $F_i$ of $M$ obtained as the closure of the set
$\{b_1, \dots, b_i\}$, and let $\sigma$ be the maximal cone of $\B(M)$
corresponding to the chain of flats $\emptyset = F_0 \subsetneq F_1
\subsetneq \dots \subsetneq F_{d-1} \subsetneq F_d = \{1,\dots,n\}$.
Let $\tau \subseteq \sigma$ be a maximal cone in a common refinement of both $V(J)$ and $\B(M)$.
The linear span $\spann(\tau) = \spann(\sigma)$ consists of all vectors 
$\bfw \in \RR^n$ for which $w_i = w_j$ whenever $\{i, j\} \subseteq F_k\setminus F_{k-1}$ for some $k=1,\ldots,d$.
A monomial $\bfx^\bfu$ in $\BB[x_1^{\pm 1},\dots,x_n^{\pm 1}]$ has
$\bfw$-weight equal to zero for all such $\bfw$ 
if and only if for every $k$ we have
$\sum_{i \in F_k \setminus F_{k-1}} u_i = 0$. As in
Remark~\ref{re:Mult}, let $S$ be the subsemiring of $\BB[x_1^{\pm 1},\dots,x_n^{\pm 1}]$
consisting of all polynomials involving only such monomials, 
and let $J'$ be the (tropical) ideal in
$\BB[x_1^{\pm 1},\ldots,x_n^{\pm 1}]$ generated by $J$.

Take $\bfv$ to be a vector in the relative interior of $\tau$.
Since $\tau$ has multiplicity $1$ in $V(J)$, 
$\inn_\bfv(J') \cap S$ is zero-dimensional of degree $1$, and contains no monomials. 
Hence for any pair of distinct monomials $\bfx^\bfu,
\bfx^{\bfu'}$ in $S$, $\inn_{\bfv} (J') \cap S$ contains the binomial
$\bfx^\bfu \tplus \bfx^{\bfu'}$. In particular, if $\{i \neq j\} \subseteq F_k \setminus F_{k-1}$
for some $k$ then $0 \oplus x_i^{-1} x_j \in \inn_\bfv(J')
\cap S$, and thus $x_i \tplus x_j \in \inn_\bfv(J')$.
As $J$ is homogeneous and saturated with respect to $x_1
\cdots x_n$, this implies that there is a polynomial of the form $x_i \tplus x_j \tplus f$
in $J_1$ where $f$ is a sum of variables all contained in
$F_{k-1}$. 
It follows that $x_i$ is in the closure of $F_{k-1} \cup \{x_j\}$ in the matroid $M(J_1)$.
We conclude that $\{b_1, \dots, b_d\}$ is a generating set in the matroid $M(J_1)$, 
and thus $\rank(M(J_1)) \leq \rank(M)$. 
Now, the tropical prevariety cut out by the linear polynomials in $J$ is equal to $\B(M(J_1))$, so we have 
$\B(M(J_1)) \supseteq V(J) = \B(M)$. It follows from \cite{Rincon}*{Lemma 7.4} that 
$\B(M(J_1)) = \B(M)$, and thus $M(J_1) = M$, completing the proof.
\end{proof}

We conclude with the proof of the main theorem.

\begin{proof}[Proof of Theorem~\ref{thm:main}]
Suppose that such an $I$ exists, and denote $O:=M \oplus N$. We first
argue that we may replace $I$ by an ideal $J$ that is homogeneous
as well as saturated. To this end, let
$\sigma$ be a polyhedron in $V(I)$ whose affine span is $\RR \cdot \one$ (which is contained in the lineality space of $\B(O)$), 
and let $\bfw$ be in the relative
interior of $\sigma$.  Set $J' := \inn_\bfw I  \subseteq \BB[x_1, \dots, x_m,
y_1, \dots, y_n]$.  By Proposition~\ref{prop:Initial}, the tropical ideal $J'$ is homogeneous (with respect to $\one$) and has variety $V(J') = \starr_\bfw V(I)$,
which is equal to $\B(O)$ up to common refinement. 
Consider the homogeneous ideal $J := (J')^\sat$. By Lemma~\ref{lm:Saturation}, we have that $V(J)$ is also equal to $\B(O)$ up to common refinement. 

Now, by Proposition~\ref{pro:degree1}, $M(J_1)$ is equal
to $O$.  Since $\rk O=a+b$, we find that $H_J(1)=1+a+b$ and thus, by
Proposition~\ref{pro:upperbound}, $H_J(d) \leq \binom{a+b+d}{d}$ for
all $d$.
On the other hand, since $V(J) = \B(O)$, by Propositions~\ref{pro:complex} 
and~\ref{pro:Independence} the tropical ideal $J$ is matroidal, with associated
algebraic matroid $O = M \oplus N$. Hence, by Proposition
\ref{pro:lowerbound} we have $H_J(d) \geq \binom{a+b+d}{d}$. We conclude that
$H_J(d)=\binom{a+b+d}{d}$. 

Denote $Q:=M(J_2)$. The matroid $Q$ has rank 
$H_J(2)-H_J(1) = \binom{a+b+1}{2}$ on the ground set $S_1 \sqcup S_2 \sqcup S_3$, where $S_1:=\{x_i x_j \mid
1 \leq i \leq j \leq m \}$, $S_2:=\{y_i y_j \mid 1 \leq i \leq j \leq
n\}$, and $S_3:=\{x_i y_j \mid 1\leq i \leq m, 1\leq j \leq n\}$. The restriction
$Q|{S_1}$ is spanned by all products of
two elements in a basis of $M(J_1)|{\{x_1,x_2,\ldots,x_m\}}$, hence has rank
at most $\binom{a+1}{2}$.  Similarly, the restriction $Q|{S_2}$ has
rank at most $\binom{b+1}{2}$. Hence $Q|{S_3}$ has rank at least
$\binom{a+b+1}{2}-\binom{a+1}{2}-\binom{b+1}{2}=ab$. 

Since $J$ is saturated, for each $1\leq i \leq m$, multiplication by $x_i$
yields an isomorphism between the matroid $M(J_1)|{\{y_1,\ldots,y_n\}}
\cong N$ and the restriction of $Q$ to $x_i \cdot \{y_1,\ldots,y_n\}
\subseteq S_3$.  Similarly, for each $1\leq j \leq n$, the
restriction of $Q$ to $y_j \cdot \{x_1,\dots,x_m\}$ is isomorphic to
$M$. Hence $Q|{S_3}$ is a quasi-product of $M$ and $N$ in the sense of
Definition~\ref{de:QuasiProduct}. But the assumption in the theorem
is that such a quasi-product has rank strictly less than $a \cdot b$,
a contradiction. Hence no such ideal $I$ exists.

The second part of the main theorem is a direct consequence of the first
part and Theorem~\ref{thm:Vergnas} by Las Vergnas.
\end{proof}

\section{Concluding remarks}

Using the result by Las Vergnas that $U_{2,3}$ and $V_8$ do not have a
quasi-product of rank $8$, we have showed that the Bergman fan of their
direct sum is not the tropical variety of any tropical ideal, with weight
$1$ on all the maximal cones.

We do not know whether there exists a tropical ideal whose tropical
variety is the Bergman fan of $U_{2,3} \oplus V_8$ as a set, without the
condition that all weights be $1$. 

We also do not know whether $\B(V_8)$ itself is the tropical variety of
any tropical ideal with weight one on the maximal cones. To study this
question for a matroid $M$, one needs to develop the theory of {\em
symmetric squares} of matroids, in a fashion similar to Las Vergnas's
quasi-products from Section~\ref{sec:QuasiProducts}. But already for $V_8$ this seems considerably harder than
quasi-products of $U_{2,3}$ with $V_8$.

Finally, we'd like to point out that for any $m \geq 3$, the matroids
$U_{2,m}$ and $V_8$ do not admit a quasi-product of rank $8$. Indeed,
if $P$ were such a quasi-product on $[m] \times [8]$, then for any
basis $C \subseteq [8]$ of $V_8$ the set $[2] \times C$, which spans
$P$, would have to be a basis. But then the restriction of $P$ to
$[3] \times [8]$ would be a quasi-product of $U_{2,3}$ and $V_8$ of
rank $8$, a contradiction to Las Vergnas's Theorem~\ref{thm:Vergnas}.
This simple observation yields infinitely many matroids to which our
Theorem~\ref{thm:main} applies. However, it would be interesting to
find more intricate families of pairs of matroids that do not admit
quasi-products of the correct rank.

\begin{bibdiv}

\begin{biblist}

\bib{Ardila06}{article}{
author={Ardila, Federico},
author={Klivans, Caroline J.},
title={The {B}ergman complex of a matroid and phylogenetic trees.},
journal={J. Comb. Theory, Ser. B},
volume={96},
number={1},
pages={38--49},
year={2006}}

\bib{Birkmeyer17}{article}{
    Author = {Birkmeyer, Anna Lena},
    Author = {Gathmann, Andreas}, 
    Author = {Schmitz, Kirsten},
    Title = {{The realizability of curves in a tropical plane.}},
    Journal = {{Discrete Comput. Geom.}},
    ISSN = {0179-5376; 1432-0444/e},
    Volume = {57},
    Number = {1},
    Pages = {12--55},
    Year = {2017},
    Publisher = {Springer US, New York, NY},
}

\bib{Brugalle15}{article}{
    Author = {Brugall\'e,Erwan},
    Author = {Shaw, Kristin},
    Title = {{Obstructions to approximating tropical curves in
surfaces via intersection theory.}},
    Journal = {{Can. J. Math.}},
    ISSN = {0008-414X; 1496-4279/e},
    Volume = {67},
    Number = {3},
    Pages = {527--572},
    Year = {2015},
    Publisher = {Canadian Mathematical Society, Ottawa; Canada
University of Toronto Press, Toronto},
}

\bib{Draisma06a}{article}{
author={Draisma, Jan},
title={A tropical approach to secant dimensions},
journal={J.~Pure Appl.~Algebra},
year={2008},
volume={212},
number={2},
pages={349--363}}

\bib{Vergnas81}{article}{
    Author = {Las Vergnas, Michel},
    Title = {{On products of matroids}},
    Journal = {{Discrete Math.}},
    ISSN = {0012-365X},
    Volume = {36},
    Pages = {49--55},
    Year = {1981},
    Publisher = {Elsevier (North-Holland), Amsterdam},
}

\bib{MR}{article}{
  title={Tropical ideals},
  author={Maclagan, Diane},
  author={Rinc{\'o}n, Felipe},
  journal={Compositio Mathematica},
  volume={154},
  number={3},
  pages={640--670},
  year={2018},
  publisher={London Mathematical Society}
}

\bib{MR2}{unpublished}{
  title={Varieties of tropical ideals are balanced},
  author={Maclagan, Diane},
  author={Rinc{\'o}n, Felipe},
  note={Preprint. {\tt arXiv:2009.14557}}
}

\bib{Maclagan15}{book}{
    Author = {Maclagan, Diane}, 
    Author = {Sturmfels, Bernd},
    Title = {{Introduction to tropical geometry}},
    Journal = {{Grad. Stud. Math.}},
    ISSN = {1065-7338},
    Volume = {161},
    ISBN = {978-0-8218-5198-2/hbk},
    Pages = {xii + 363},
    Year = {2015},
    Publisher = {Providence, RI: American Mathematical Society (AMS)},
}

\bib{Mundinger}{unpublished}{
  title={The image of a tropical linear space},
  author={Mundinger, Joshua},
  note={Preprint. {\tt arXiv:1808.02150}},
}

\bib{Murota01}{article}{
	AUTHOR = {Murota, Kazuo},
	AUTHOR = {Tamura, Akihisa},
	TITLE = {On circuit valuation of matroids},
	JOURNAL = {Adv. in Appl. Math.},
	VOLUME = {26},
	YEAR = {2001},
	NUMBER = {3},
	PAGES = {192--225},
	ISSN = {0196-8858},
}

\bib{Rincon}{article}{
  title={Isotropical linear spaces and valuated Delta-matroids},
  author={Rinc{\'o}n, Felipe},
  journal={Journal of Combinatorial Theory, Series A},
  volume={119},
  number={1},
  pages={14--32},
  year={2012},
  publisher={Elsevier}
}

\bib{Speyer04}{article}{
    Author = {Speyer, David E.},
    Title = {{Tropical linear spaces}},
    Journal = {{SIAM J. Discrete Math.}},
    ISSN = {0895-4801; 1095-7146/e},
    Volume = {22},
    Number = {4},
    Pages = {1527--1558},
    Year = {2008},
    Publisher = {Society for Industrial and Applied Mathematics
(SIAM), Philadelphia, PA},
}

\bib{Speyer14}{article}{
    Author = {Speyer, David E.},
    Title = {{Parameterizing tropical curves I: Curves of genus zero
and one}},
    Journal = {{Algebra Number Theory}},
    ISSN = {1937-0652; 1944-7833/e},
    Volume = {8},
    Number = {4},
    Pages = {963--998},
    Year = {2014},
    Publisher = {Mathematical Sciences Publishers (MSP), Berkeley, CA},
}

\bib{Yu}{article}{
  title={Algebraic matroids and set-theoretic realizability of tropical varieties},
  author={Yu, Josephine},
  journal={Journal of Combinatorial Theory, Series A},
  volume={147},
  pages={41--45},
  year={2017},
  publisher={Elsevier}
}

\end{biblist}

\end{bibdiv}

\end{document}